\definecolor{darkred}{RGB}{153,0,0}
\definecolor{darkblue}{RGB}{0,0,153}
\definecolor{darkgreen}{RGB}{0,153,0}
\newcommand{\notimplies}{\centernot\implies}
\newcommand{\midrule}{\hline}
	\DeclareRobustCommand{\Zallmanfamily}{
		\fontencoding{U}%
		\fontseries{xl}%
		\fontshape{n}%
		\fontfamily{Zallman}%
		\selectfont}
	\DeclareTextFontCommand{\zall}{\Zallmanfamily}
\newcommand{\fancyinitial}[2]{\lettrine[lines=3]{#1}{#2}}
\newcommand{\algcomment}[1]{\hfill{\color{gray}//~#1}}
\theoremstyle{plain}
\newtheorem{theorem}{Theorem}[section]
\newtheorem{lemma}[theorem]{Lemma}
\newtheorem{proposition}[theorem]{Proposition}
\theoremstyle{definition}
\newtheorem{definition}[theorem]{Definition}
\theoremstyle{remark}
\newtheorem{remark}[theorem]{Remark}
\newenvironment{example}{\pushQED{\qed}\examplex}{\popQED\endexamplex}
\newcommand{\KeywordsAnd}{\and}
\newcommand{\KeywordsEnd}{}
\newcommand{\keywords}[1]{\par\noindent{\small\def\and{\unskip,\ }{\bf Keywords. }#1.}\par}
\newcommand{\subclass}[1]{\par\noindent{\small\def\and{\unskip,\ }{\bf AMS MSC. }#1.}\par}
\newcommand{\AMSand}{\and}
\crefname{line}{Step}{Steps} 
\newcommand{\emailLink}[1]{\textsc{email} \href{mailto:#1}{#1}}
\newcommand{\orcidLink}[1]{\textsc{orcid} \href{https://orcid.org/#1}{#1}}
\newcommand{\amsmscLink}[1]{\href{http://www.ams.org/mathscinet/msc/msc2020.html?t=#1}{#1}}
\newcommand{\N}{\mathbb{N}}
\newcommand{\Z}{\mathbb{Z}}
\newcommand{\R}{\mathbb{R}}
\DeclareMathOperator*{\argmin}{\arg\min}
\DeclareMathOperator{\dom}{dom}
\DeclareMathOperator{\proj}{proj}
\newcommand{\coloneqq}{:=}
\DeclareMathOperator*{\minimize}{minimize}
\DeclareMathOperator{\stt}{subject~to}
\DeclareMathOperator{\wrt}{over}
\newcommand{\func}[3]{#1 \colon #2 \to #3}
\newcommand{\innprod}[2]{\langle #1, #2 \rangle}
\newcommand{\normalcone}{\mathcal{N}}
\newcommand{\normalconefrechet}{\normalcone^{\textup{F}}}
\newtcolorbox{mybox}[1][]{%
	left=0pt,
	right=0pt,
	top=0pt,
	bottom=0pt,
	colback=gray!12,
	colframe=gray!12,
	width=\dimexpr\textwidth\relax,
	enlarge left by=0mm,
	boxsep=5pt,
	arc=5pt,outer arc=5pt,
	#1
}
\newtcolorbox{mydefbox}[1][]{%
	left=0pt,
	right=0pt,
	top=0pt,
	bottom=0pt,
	colback=cb2red!16,
	colframe=cb2red!16,
	width=\dimexpr\textwidth\relax,
	enlarge left by=0mm,
	boxsep=5pt,
	arc=5pt,outer arc=5pt,
	#1
}
\newcommand{\norm}[1]{\| #1 \|}
\newcommand{\normlp}[1]{\norm{ #1 }_{\textup{PL}}}
\newcommand{\lpball}{\mathbb{B}_{\textup{PL}}}
\newcommand{\meritf}{m}
\newcommand{\meritpfac}{\kappa_{\meritf}}
\newcommand{\spaceX}{\R^n}
\newcommand{\XX}{\mathcal{X}}
\newcommand{\II}{\mathcal{I}}
\newcommand{\psimeas}{\Psi}
\pgfplotsset{compat=1.18}
\newcommand{\TheAuthorADM}{Alberto De~Marchi}
\newcommand{\TheEmailADM}{alberto.demarchi@unibw.de}
\newcommand{\TheOrcidADM}{0000-0002-3545-6898}
\newcommand{\TheAffiliationADM}{%
	University of the Bundeswehr Munich,
	Department of Aerospace Engineering,
	Institute of Applied Mathematics and Scientific Computing,
	85577 Neubiberg, Germany%
}
\newcommand{\TheAcknowledgementsADM}{%
	The author thanks the anonymous reviewers for their stimulating comments, which led to intriguing observations and eventually stronger arguments.
}
\newcommand{\TheShortTitle}{Mixed-integer linearity in nonlinear optimization}
\newcommand{\TheTitle}{\TheShortTitle:\\a trust region approach}
\newcommand{\TheKeywords}{%
	Mixed-integer programming\KeywordsAnd
	nonlinear programming\KeywordsAnd
	successive linearization schemes\KeywordsAnd
	trust region methods\KeywordsEnd
}
\newcommand{\TheAMSsubj}{%
	\amsmscLink{65K05}\AMSand
	\amsmscLink{90C06}\AMSand
	\amsmscLink{90C11}\AMSand
	\amsmscLink{90C30}
}
\newcommand{\TheAbstract}{%
	Bringing together nonlinear optimization with polyhedral and integrality constraints
	enables versatile modeling, but poses significant computational challenges.
	We investigate a method to address these problems based on sequential mixed-integer linearization with trust region safeguard,
	computing feasible iterates via calls to a generic mixed-integer linear solver.
	Convergence to critical, possibly suboptimal, feasible points is established for arbitrary starting points.
	Finally, we present numerical applications in nonsmooth optimal control and optimal network design and operation.%
}
\begin{document}
	
\title{\bfseries \TheTitle}
\author{\TheAuthorADM\thanks{\TheAffiliationADM.
		\emailLink{\TheEmailADM},
		\orcidLink{\TheOrcidADM}.%
	}}

\maketitle
\begin{abstract}
	\TheAbstract%
	\keywords{\TheKeywords}%
	\subclass{\TheAMSsubj}%
\end{abstract}

\tableofcontents

\section{Introduction}

\fancyinitial{T}{he} operation of complex, integrated process systems demands efficient use of resources
whilst imposing tight safety constraints.
Mixed-integer optimization provides a powerful and flexible mathematical template for modeling many tasks that involve discrete and continuous variables \cite{belotti2013mixed,grossmann2013systematic}.
This work lies at the intersection of nonlinear and integer programming,
leaning toward a continuous optimization perspective,
and explores how nonlinear programming (NLP) techniques can help as heuristics.
We focus here on the minimization of a smooth function subject to polyhedral constraints, an important problem class in NLP, complemented by integrality specifications.
The denomination ``mixed-integer linearity'' in the title refers to the mixed nature of real- and integer-valued decision variables and to the linear relationships restricting their admissible values.
In particular,
we consider optimization problems with a nonlinear objective and mixed-integer linear constraints in the form
\begin{align}\tag{P}\label{eq:P}
	\minimize\quad{}& f(x) \\
	\wrt\quad{}& x\in\XX
	\coloneqq
	\bar{\XX}
		\cap
	\left\{
	x \in \spaceX
	\,\middle|\,
	x_i \in \Z \quad \forall i\in\II
	\right\} \nonumber
\end{align}
with objective function $\func{f}{\XX}{\R}$ and nonempty feasible set $\XX \subseteq \spaceX$.
The latter is described by a closed convex polyhedral set $\bar{\XX}\subseteq\spaceX$ and integrality constraints defined by some index set $\II \subseteq \{1,\ldots,n\}$.
We intend to exploit the tractable, although difficult, structure of $\XX$ in our numerical method,
in particular avoiding quadratic terms that may arise from local models or regularizations
and instead relying on mixed-integer linear programs (MILP) as subproblems.
To do so, we follow \cite[Section 3]{dambrosio2012storm} and make use of polyhedral norms, e.g., $\ell_p$ with $p\in\{1,\infty\}$, instead of the classical Euclidean $\ell_2$.
Then, a practical assumption is that MILPs involving $\XX$ can be efficiently solved,
leveraging mature LP and MILP technology,
despite their NP-completeness.

We investigate \eqref{eq:P} with the implicit assumption that there are both real- and integer-valued variables as well as some couplings thereof in $\XX$ and nonlinearities in $f$.
Should this not be the case, off-the-shelf methods for MILP and NLP could be readily adopted for tackling \eqref{eq:P}.
Then,
partitioning the decision variables $x$ into real-valued $u$ and integer-valued $z$,
we consider hereafter some blanket assumptions on \eqref{eq:P}.
First of all, we suppose that
\begin{enumerate}[label=({A\arabic{*}})]
	\item\label{ass:dfLipschitz}%
	function $f$ is continuously differentiable
	and $\nabla f$ is locally Lipschitz continuous;
	\item\label{ass:flinearz}%
	the objective function $f$
	is of the form $(u,z) \mapsto f_1(u) + \innprod{f_2}{z}$,
	for some smooth function $f_1$ and vector $f_2$ of suitable size;
	\item\label{ass:zbounded}%
	all feasible values for the integer-valued variables $z$ lie in a bounded set.
\end{enumerate}
Assumption~\ref{ass:dfLipschitz} concerns the objective function $f$ and its smoothness, and requires only local Lipschitz gradient continuity \cite{demarchi2023monotony}.
Assumption~\ref{ass:flinearz} of integer-linearity for $f$ 
is introduced, in line with \cite{quirynen2021sequential}, to rule out
the need for derivative approximations or exotic notions; e.g., the difference formula in \cite{exler2007trust}.
Notice, however, that Assumption~\ref{ass:flinearz} is considered for clarity of presentation and it is not restrictive.
A generic nonlinear, albeit smooth, dependence of $f$ on the integer-valued variables can always be incorporated
with a suitable reformulation that satisfies Assumption~\ref{ass:flinearz},
possibly adding auxiliary real-valued variables and complementing $\XX$ with linear equality constraints.
Specification \ref{ass:zbounded}
is not only reasonable and often valid in practice \cite{belotti2013mixed}, the prominent modeling tool being binary variables,
but it also greatly simplifies the presentation, with minor algorithmic consequences.

Focusing on \eqref{eq:P} without any convexity assumptions,
we will not demand global optimality
but seek instead an affordable local ``solution''.
The underlying optimality notion is defined
by localization in a trust region.
We analyze the similarities of projected gradient-type methods with trust region linearization schemes,
highlighting how polyhedral norms may lead to undesirable properties for the former,
making the latter an interesting alternative.
After detailing an iterative numerical scheme,
we investigate its global convergence properties, in the sense of arbitrary initial points.
The algorithm generates feasible iterates with (nonmonotonically) decreasing objective,
and relies on averaged merit values to monitor and enforce convergence.

\paragraph*{Related Work}

The ideas behind sequential linearization and trust region methods are not new \cite{yuan1985conditions,byrd2005convergence}, but there has been little progress toward relaxing some fundamental assumptions.
Sequential linearization methods for problems with polyhedral constraints heavily exploit convexity of the feasible set \cite{byrd2005convergence},
notwithstanding that the approach has been applied to particular nonconvex problems as well,
prominently by \cite{kirches2022sequential} for complementarity constraints.
In contrast, proximal algorithms have been developed for the fully nonconvex structured setting,
with globalization based on line search \cite{themelis2018forward}
and trust region \cite{aravkin2022proximal}.
However, since they rely on local quadratic models to achieve sufficient decrease,
their direct application to \eqref{eq:P} is hindered in practice.

\paragraph*{Contributions and Outline}
We trace back the sequential linearization approach
and overcome the issues arising from the feasible set being nonconvex.
This is made possible by treating the integralities through a black-box,
both theoretically and numerically.
\Cref{sec:optimality}
is dedicated to optimality concepts for \eqref{eq:P} and focuses on
a notion of criticality that
requires access only to a linear minimization oracle.
We propose in \cref{sec:smil} a numerical method based on sequential mixed-integer linearization,
with a globalization mechanism that brings together trust region and line search.
The algorithm is characterized and global convergence results are established under mild assumptions.
Finally, in \cref{sec:numresults} we report numerical results on two nontrivial example problems, illustrating the algorithmic behavior and performance.

\section{Approach and Optimality Concepts}\label{sec:optimality}

\fancyinitial{T}{his} section is dedicated to juxtaposing projection and trust region approaches to tackle \eqref{eq:P}.
Optimality notions and characterizations are then discussed in view of the numerical algorithm developed later on.

\subsection{From projections to trust regions}

Attracted by the success of proximal-gradient methods to numerically solve nonconvex nonsmooth problems,
one may be tempted to address \eqref{eq:P} with such methods recast with polyhedral norms.
The classical \emph{round} setting involves an $\ell_2$-norm minimization to compute a projection onto the feasible set $\XX$,
whereas polyhedral projections based on $\ell_1$- or $\ell_\infty$-norm can be reformulated in terms of MILP.
A round projected gradient update at $\bar{x}\in\XX$ is given by
\begin{equation}
	\label{eq:projgrad0}
	x^+
	\in
	\proj_\XX( \bar{x} - \gamma \nabla f(\bar{x}) )
	=
	\argmin_{x \in \XX} \frac{1}{2 \gamma}\| \bar{x} - \gamma \nabla f(\bar{x}) - x \|_2^2
\end{equation}
for some stepsize $\gamma > 0$.
Owing to the structure of $\XX$,
the round projection requires solving a mixed-integer \emph{quadratic} program (MIQP), because of the quadratic term.
Seeking a MILP subproblem,
one could replace the Euclidean norm with a polyhedral one as distance metric
and update according to
\begin{equation}\label{eq:lpProjGrad}
	x^+ \in \argmin_{x \in \XX} \norm{ \bar{x} - \gamma \nabla f(\bar{x}) - x }_p .
\end{equation}
Unfortunately,
taking such projections with $p\in\{1,\infty\}$ can lead to odd situations.
In fact,
not only polyhedral projections can be set-valued even for convex sets,
but a (badly selected) projected gradient may yield an increase of the objective, regardless of the stepsize.

Moving away from the projection point-of-view,
we can consider a linear model of $f$ around $\bar{x}$ and,
to avoid artificial unboundedness,
replace the quadratic regularization term
inherent in \eqref{eq:projgrad0}
with a trust region constraint.
Such trust region subproblem at $\bar{x}\in\XX$ with radius $\Delta > 0$ reads
\begin{equation}\label{eq:criticality_milp}
	\minimize_{x\in\XX} \quad f(\bar{x}) + \innprod{\nabla f(\bar{x})}{x - \bar{x}}
	\qquad
	\stt\quad \normlp{x - \bar{x}} \leq \Delta ,
\end{equation}
where $\func{\normlp{\cdot}}{\spaceX}{\R_+}$ denotes the application of a norm on the real-valued entries only, according to the index set $\II$.%
\footnote{%
	If the index set $\II$ is nonempty, $\normlp{\cdot}$ lacks positive definiteness, since $\normlp{x}=0 \notimplies x=0$, and therefore it is not a norm. However, $\normlp{\cdot}$ is subadditive and absolutely homogeneous, namely it satisfies
	\[\normlp{x+y} \leq \normlp{x}+\normlp{y}
	\qquad\text{and}\qquad
	\normlp{\alpha x} = |\alpha| \normlp{x}
	\]
	for all $x,y\in\spaceX$ and $\alpha\in\R$.
}
Despite the trust region stipulation being relative only to the real-valued variables, hence the notation ``PL'' for ``partial localization'',
\eqref{eq:criticality_milp} is well posed owing to Assumption~\ref{ass:zbounded},
which makes the intersection of $\XX$ with (closed) \emph{PL-balls}
\[
\lpball(x,\Delta) \coloneqq \left\{ w\in\spaceX \,|\, \normlp{w-x}\leq\Delta \right\}
\]
always a compact set.
Instead, the partial localization with $\normlp{\cdot}$ gives rise to a more inclusive definition of neighborhoods, and hence a comparatively strong concept of ``local optimality'', see \cref{sec:optimalityConcepts} below.

Since the construction of \eqref{eq:criticality_milp} is similar in spirit to \eqref{eq:projgrad0},
one can expect a connection between the two approaches.
For an $\ell_p$-projected gradient scheme, $1 \leq p \leq \infty$,
the next iterate from a feasible point $\bar{x}\in\XX$ is obtained from
\eqref{eq:lpProjGrad},
whose solutions
$x^+ \in \XX$ must satisfy
$
	\norm{ x^+ - \bar{x} + \gamma \nabla f(\bar{x}) }_p
	\leq
	\gamma \norm{ \nabla f(\bar{x}) }_p
$
and, using the triangle inequality,
\begin{equation*}
	\norm{x^+ - \bar{x}}_p
	\leq{}
	\norm{x^+ - \bar{x} + \gamma \nabla f(\bar{x})}_p + \norm{- \gamma \nabla f(\bar{x})}_p 
	\leq{}
	2 \gamma \norm{\nabla f(\bar{x})}_p .
\end{equation*}
This means that
a projected gradient subproblem imposes a trust region-like condition,
whereas a trust region subproblem does not seek a projection but instead to minimize (a local model of) the objective function.
Thus, as on a quest to avoid round terms, we explore the more natural trust region perspective with polyhedral norms.
More precisely, aiming at MILP subproblems, we adopt a polyhedral norm for constructing $\normlp{\cdot}$, deliberately excluding round norms such as the classical $\ell_2$-norm and the weighted norm $\|\cdot\|_M$ for some $M\succ 0$.
Therefore, possible definitions for $\normlp{\cdot}$ are $\normlp{x} \coloneqq \sum_{i\notin\II} |x_i|$ and $\normlp{x} \coloneqq \max_{i\notin\II} |x_i|$, respectively, for the $\ell_1$- and $\ell_\infty$-norm.

\subsection{Optimality concepts}\label{sec:optimalityConcepts}

Considering the problem of minimizing a continuous real function $f$ over a closed set
$\XX$, it is clear what a global solution is:
we call $\bar{x} \in \XX$ a \emph{global minimizer} for \eqref{eq:P} if $f(\bar{x}) \leq f(x)$ for all $x \in \XX$.
In this context, however, the notion of \emph{local} minimizer remains elusive, because of the delicate role played by neighborhoods, which can easily make \emph{all} isolated points (of $\XX$) local minimizers.
For constructing a useful localized optimality concept,
we consider the notion of neighborhoods with partial localization induced by
the $\normlp{\cdot}$, as adopted in \eqref{eq:criticality_milp}.

\begin{definition}\label{def:isolated}
	A \emph{PL-neighborhood} of a point $\bar{x}\in\XX$ is a subset of $\spaceX$ that contains an open PL-ball $\left\{ w\in\spaceX \,|\, \normlp{w-\bar{x}} < \Delta \right\}$ for some $\Delta>0$.
	A point $\bar{x}\in\XX$ is called a \emph{PL-isolated} point of $\XX$ if there exists
	a PL-neighborhood of $\bar{x}$ that does not contain any other point of $\XX$.
\end{definition}
Notice that for PL-isolated points of $\spaceX$, \cref{def:isolated} reduces to the classical notion of isolatedness.

This gives rise to the following definition,
consistent with the global counterpart.

\begin{definition}[minimizers]\label{def:minimizer}
	A point $\bar{x}\in\XX$ is called a \emph{local minimizer} for \eqref{eq:P} if
	there exists $\Delta>0$ such that
	$f(\bar{x}) \leq f(x)$
	for all $x\in\XX\cap\lpball(\bar{x},\Delta)$.
	If this property holds for all $\Delta>0$,
	then $\bar{x}$ is called a \emph{global minimizer} for \eqref{eq:P}.
\end{definition}

Seeking affordable algorithms to find candidate solutions for \eqref{eq:P},
we focus on strong (and yet practical) necessary conditions for such (local) optimality notion
\cite[Chapter 3]{birgin2014practical}.
According to \cite[Section 3]{themelis2018forward}, $\bar{x}$ is \emph{critical} if it is $\gamma$-critical for some $\gamma > 0$, i.e., $\bar{x} \in \proj_\XX( \bar{x} - \gamma \nabla f(\bar{x}) )$,
and such notion is stronger than stationarity for nonconvex problems.
Due to the difficulties discussed above in relation to round projections, we shall consider a notion based on \eqref{eq:criticality_milp} instead.
Thus,
we define the criticality measure $\psimeas$ associated to \eqref{eq:P} as follows:
\begin{equation}
	\label{eq:psimeas}
	\forall x\in\XX ,\,
	\forall \Delta\geq 0 \colon
	\quad
	\psimeas(x; \Delta)
	\coloneqq
	\max\left\{
	\innprod{\nabla f(x)}{x-w}
	\,|\,
	w\in\XX\cap\lpball(x,\Delta)
	\right\}
	.
\end{equation}
Since $x\in\XX$, the lower bound $\psimeas(\cdot;\Delta) \geq 0$ holds for all $\Delta \geq 0$.
Although analogous to the criticality measure of \cite{yuan1985conditions,byrd2005convergence}, which focused on \eqref{eq:P} without integrality constraints, in \eqref{eq:psimeas} $\psimeas$ depends additionally on the trust region radius $\Delta$, which provides a localization---dispensable in the convex setting.
Considering (global) minimizers of the program obtained by linearization of $f$ about $\bar{x}$ and localization with a trust region, \eqref{eq:psimeas} leads to the following definition.

\begin{definition}[criticality]\label{def:criticality}
	Given some $\Delta > 0$,
	a point $\bar{x} \in \XX$ is called \emph{$\Delta$-critical} for \eqref{eq:P} if $\psimeas(\bar{x}; \Delta) = 0$;
	it is called \emph{critical} for \eqref{eq:P} if it is $\Delta$-critical for some $\Delta>0$.
\end{definition}

Remark that \cref{def:criticality} is well-posed: if there exists some $\bar{\Delta}>0$ such that $\psimeas(\bar{x};\bar{\Delta}) = 0$, namely $x = \bar{x}$ is optimal with $\bar{\Delta}$,
then $\bar{x}$ is necessarily optimal for all $\Delta \in [0, \bar{\Delta}]$,
for otherwise it could not be optimal for the relaxed constraint with $\bar{\Delta}$.

\cref{def:criticality} is closely related to stationarity in nonlinear optimization,
interpreted as the absence of feasible first-order descent directions,
but localized as criticality in (nonconvex) structured optimization \cite{themelis2018forward}.
The following foundational yet simple result states that criticality constitutes indeed a necessary first-order optimality condition for \eqref{eq:P},
in the sense of \cref{def:minimizer,def:criticality}, respectively.
\begin{proposition}\label{prop:necessary}
	Let $\bar{x}\in\spaceX$ be a local minimizer for \eqref{eq:P}.
	Then, $\bar{x}\in\XX$ is critical.
\end{proposition}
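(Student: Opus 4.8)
The plan is to argue by contradiction: suppose $\bar{x}$ is a local minimizer, witnessed by some radius $\Delta_0>0$, yet fails to be critical. Since $\psimeas(\bar{x};\Delta)\ge 0$ for every $\Delta\ge 0$, non-criticality means $\psimeas(\bar{x};\Delta)>0$ for all $\Delta>0$. Picking a sequence $\Delta_k\downarrow 0$ and maximizers in \eqref{eq:psimeas} (which exist because $\XX\cap\lpball(\bar{x},\Delta_k)$ is compact), I obtain feasible points $w_k\in\XX\cap\lpball(\bar{x},\Delta_k)$ with $\innprod{\nabla f(\bar{x})}{w_k-\bar{x}}<0$. Writing $\bar{x}=(\bar{u},\bar{z})$ and $w_k=(w_{k,u},w_{k,z})$, the partial-localization bound gives $\normlp{w_k-\bar{x}}=\norm{w_{k,u}-\bar{u}}\le\Delta_k\to 0$, so the real parts converge, $w_{k,u}\to\bar{u}$.

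The crux is that the integer parts need not tend to $\bar{z}$, so the usual ``take a small feasible step along a descent direction'' argument is unavailable: the segment joining $\bar{x}$ to $w_k$ violates integrality. I resolve this through Assumption~\ref{ass:zbounded}: the values $w_{k,z}$ lie in a bounded subset of $\Z^{|\II|}$, hence take finitely many values, and along a subsequence $w_{k,z}\equiv\hat{z}$ is constant. Because $\bar{\XX}$ is closed, the limit $\hat{x}:=(\bar{u},\hat{z})$ belongs to $\XX$, and $\normlp{\hat{x}-\bar{x}}=0$ places it in every PL-ball around $\bar{x}$, in particular inside the neighborhood witnessing local minimality.

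Next I use Assumption~\ref{ass:flinearz} to render the integer contribution exactly linear. Setting $c:=\innprod{f_2}{\hat{z}-\bar{z}}$, the linear model splits as $\innprod{\nabla f(\bar{x})}{w_k-\bar{x}}=\innprod{\nabla f_1(\bar{u})}{w_{k,u}-\bar{u}}+c$, while local minimality applied at $\hat{x}$ yields $f(\hat{x})\ge f(\bar{x})$, that is $c\ge 0$. Letting $k\to\infty$, continuity of $\nabla f_1$ and $w_{k,u}\to\bar{u}$ force the first summand to vanish, while the whole expression remains negative; hence $c\le 0$, so $c=0$, and therefore $\innprod{\nabla f_1(\bar{u})}{w_{k,u}-\bar{u}}<0$ for each $k$.

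Finally I reach a contradiction through a purely continuous first-order argument in the real variables. With $c=0$, local minimality gives $f_1(u)\ge f_1(\bar{u})$ for all $u$ near $\bar{u}$ in the polyhedral slice $P:=\{u\mid(u,\hat{z})\in\bar{\XX}\}$, which is convex and contains both $\bar{u}$ and every $w_{k,u}$. The standard ray argument---using convexity of $P$ to keep $\bar{u}+t(u-\bar{u})$ feasible and letting $t\downarrow 0$---then yields $\innprod{\nabla f_1(\bar{u})}{u-\bar{u}}\ge 0$ for all $u\in P$, in particular for $u=w_{k,u}$, contradicting the previous estimate. I expect the delicate point to be exactly the handling of the integer jump: reconciling the discrete feasible set with a first-order condition, which is precisely where Assumptions~\ref{ass:flinearz} and~\ref{ass:zbounded} intervene to pin down $c=0$ and reduce the argument to the convex slice $P$.
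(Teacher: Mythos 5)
Your proof is correct and takes essentially the same route as the paper's own (much terser) argument: Assumptions~\ref{ass:flinearz}--\ref{ass:zbounded} reduce the integer contribution to the exactly linear quantity $c=\innprod{f_2}{\hat{z}-\bar{z}}$, which local minimality and the descent property jointly force to zero, and the remaining real-variable step is the standard first-order condition on the convex slice (the paper delegates this to \cite[Lemma 3.2]{birgin2014practical}). Your extraction of a constant integer part $\hat{z}$ via boundedness and the resulting $c=0$ argument are precisely the details behind the paper's one-line claim that ``optimality and criticality coincide with regard to the integer-valued variables,'' so the proposal is a fully worked-out rendering of the same idea.
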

\begin{proof}
	Feasibility of $\bar{x}$ is readily inherited.
	Owing to Assumptions~\ref{ass:flinearz}--\ref{ass:zbounded},
	optimality and criticality coincide with regard to the integer-valued variables. 
	Then, the assertion is elementary if $\nabla f(\bar{x}) = 0$ or $\bar{x}\in\XX$ is PL-isolated.
	Otherwise,
	continuous differentiability of $\func{f}{\XX}{\R}$ guarantees that no local improvements are possible (see, e.g., \cite[Lemma 3.2]{birgin2014practical}),
	concluding the proof.
\end{proof}

\cref{prop:necessary} portrays criticality as a necessary condition for (local and global) optimality,
regardless of constraint qualifications.
An analogous concept is that of B-stationarity:
a point $\bar{x}\in\XX$ is called \emph{B-stationary} for \eqref{eq:P} if
$- \nabla f(\bar{x}) \in \normalconefrechet_\XX(\bar{x})$,
where $\normalconefrechet_\XX(\bar{x})$ denotes the regular (or Fr\'echet) normal cone of $\XX$ at $\bar{x}$ \cite[Def. 6.38]{bauschke2017convex}.
A relevant stationarity notion in nonlinear optimization, B-stationarity can be interpreted as the absence of feasible first-order descent directions,
see \cite{kirches2022sequential}.
Although B-stationarity and criticality coincide for \eqref{eq:P} in the special case $\II=\emptyset$,
the following illustrative example with complementarity constraints shows that, in general, B-stationarity $\centernot{\implies}$ criticality.
A more comprehensive characterization of criticality is left for future elaborations, possibly tailored to prominent problem classes, such as NLP with combinatorial structures.

\begin{example}
	Consider \eqref{eq:P} with $\XX\subset \R^2\times\Z$ described by
	\begin{align*}
	0 \leq u_1 \leq U z,\quad
	0 \leq u_2 \leq U (1-z),\quad
	z\in\{0,1\}
	\end{align*}
	for some fixed $U > 0$.
	Notice that the feasible set $\XX$ provides a lifted mixed-integer linear description of the nonconvex set $\mathcal{C}\subset\R^2$ obtained by intersecting the complementarity constraint $0 \leq u_1 \perp u_2 \geq 0$ with the hyperbox $[0,U]^2$.
	The notation $u_1 \perp u_2$ here stands for the requirement $u_1 u_2 = 0$.
	The problematic point $\bar{u} = (0,0)$,
	in its ambient space $\R^2$,
	is B-stationary if and only if $-\nabla f_1(\bar{u}) \in \normalconefrechet_{\mathcal{C}}(\bar{u}) = (-\infty,0]^2$.	

	Now we focus on $\bar{x}_0 = (\bar{u},0)$ and $\bar{x}_1 = (\bar{u},1)$, the only two points of $\XX$ recovering $\bar{u}$ when projected onto the plane $(u_1,u_2)$.
	Elaborating on \cref{def:criticality} we find that
	the problematic points $\bar{x}_0$ and $\bar{x}_1$ are critical if and only if $-\nabla f(\bar{x}_0) \in (-\infty,0]^3$ and $-\nabla f(\bar{x}_1) \in (-\infty,0]^2\times[0,\infty)$, respectively.
	If the objective $f$ does not depend on the binary variable $z$ (that is, if $f_2=0$), then the conditions for B-stationarity of $\bar{u}$ in $\mathcal{C}$ and those for criticality of $\bar{x}_0$ and $\bar{x}_1$ in $\XX$ coincide.
	In contrast, if $f_2\ne 0$, so that $f$ explicitly depends on $z$, then criticality is strictly more restrictive than B-stationarity, which corresponds to
	$-\nabla f(\bar{x}_0) \in \R\times(-\infty,0]\times\R$ and $-\nabla f(\bar{x}_1) \in (-\infty,0]\times\R^2$.
\end{example}

Before developing and analyzing a numerical scheme for \eqref{eq:P},
we shall introduce an approximate counterpart of \cref{def:criticality}.
The following definition requires that a point $\bar{x}\in\XX$ is not too far from being a minimizer for \eqref{eq:criticality_milp},
as monitored using $\psimeas$ from \eqref{eq:psimeas}.

\begin{definition}[$\varepsilon$-criticality]\label{def:approx_criticality}
	Given some $\varepsilon>0$ and $\Delta > 0$,
	a point $\bar{x} \in \XX$ is called \emph{$\varepsilon$-$\Delta$-critical} for \eqref{eq:P} if $\psimeas(\bar{x}; \Delta) \leq \varepsilon$;
	it is called \emph{$\varepsilon$-critical} for \eqref{eq:P} if it is $\varepsilon$-$\Delta$-critical for some $\Delta>0$.
\end{definition}

For later reference we give the following characterization of noncritical points.
\begin{lemma}\label{lem:noncritical}
	Let $\bar{x} \in \XX$ be not critical for \eqref{eq:P}.
	Then, $\nabla f(\bar{x}) \neq 0$,
	$\bar{x}$ is not a PL-isolated point of $\XX$ and,
	for all $\Delta>0$, there exist $x\in\XX\cap\lpball(\bar{x},\Delta)$ and $\varepsilon>0$ such that $\innprod{\nabla f(\bar{x})}{x-\bar{x}} \leq - \varepsilon$.
	Moreover, for any given $\delta>0$ the inequalities
	\begin{equation}
		\label{eq:sandwich}
		\psimeas(\bar{x};\delta)
		\geq
		\psimeas(\bar{x};\Delta)
		\geq
		\frac{\Delta}{\delta} \psimeas(\bar{x};\delta)
	\end{equation}
	hold for all $\Delta\in[0,\delta]$.
\end{lemma}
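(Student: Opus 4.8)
The plan is to treat the statement in two parts. The first three assertions are equivalent contrapositives of facts already embedded in \cref{prop:necessary} and \cref{def:criticality}, so I would dispatch them quickly; the genuine content lies in the sandwich inequality \eqref{eq:sandwich}, which hinges on the positive homogeneity of $\normlp{\cdot}$ recorded in the footnote.

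\textbf{First part (the three characterizations).} Since $\bar{x}$ is not critical, by \cref{def:criticality} it is not $\Delta$-critical for any $\Delta > 0$, i.e. $\psimeas(\bar{x};\Delta) > 0$ for every $\Delta > 0$. I would argue the contrapositive of the relevant clauses in the proof of \cref{prop:necessary}: there it is shown that if $\nabla f(\bar{x}) = 0$ or $\bar{x}$ is PL-isolated, then the maximization in \eqref{eq:psimeas} yields value zero, hence $\Delta$-criticality. Since noncriticality forbids this, we get $\nabla f(\bar{x}) \neq 0$ and $\bar{x}$ is not PL-isolated. The third clause is just an unpacking of $\psimeas(\bar{x};\Delta) > 0$: the max in \eqref{eq:psimeas} is attained at some $w = x \in \XX\cap\lpball(\bar{x},\Delta)$ with $\innprod{\nabla f(\bar{x})}{\bar{x}-x} = \psimeas(\bar{x};\Delta) > 0$, so setting $\varepsilon \coloneqq \psimeas(\bar{x};\Delta) > 0$ gives $\innprod{\nabla f(\bar{x})}{x-\bar{x}} = -\varepsilon \leq -\varepsilon$, as required. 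The compactness of $\XX\cap\lpball(\bar{x},\Delta)$ (noted after \eqref{eq:criticality_milp}) together with continuity of the linear objective guarantees the maximum is attained.

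\textbf{Second part (the sandwich inequality).} Fix $\delta > 0$ and let $\Delta \in [0,\delta]$. The left inequality $\psimeas(\bar{x};\delta) \geq \psimeas(\bar{x};\Delta)$ is immediate from monotonicity: a smaller radius shrinks the feasible set $\XX\cap\lpball(\bar{x},\Delta) \subseteq \XX\cap\lpball(\bar{x},\delta)$, so the maximum over the larger set dominates. For the right inequality I would exploit scaling. Let $w \in \XX\cap\lpball(\bar{x},\delta)$ attain $\psimeas(\bar{x};\delta)$, and consider the scaled point $w_t \coloneqq \bar{x} + t(w-\bar{x})$ with $t \coloneqq \Delta/\delta \in [0,1]$. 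By absolute homogeneity of $\normlp{\cdot}$ one has $\normlp{w_t - \bar{x}} = t\,\normlp{w-\bar{x}} \leq t\delta = \Delta$, so $w_t$ lies in $\lpball(\bar{x},\Delta)$; and by linearity $\innprod{\nabla f(\bar{x})}{\bar{x}-w_t} = t\,\innprod{\nabla f(\bar{x})}{\bar{x}-w} = \tfrac{\Delta}{\delta}\psimeas(\bar{x};\delta)$. The one subtlety is feasibility: I must check $w_t \in \XX$. This is where convexity of $\bar{\XX}$ enters (the segment from $\bar{x}$ to $w$ stays in $\bar{\XX}$), but the integer coordinates need care. I would argue that because $\normlp{\cdot}$ localizes only the real-valued variables and the integer variables are unconstrained by the PL-ball, the scaling $w_t$ can be taken to keep the integer block equal to that of $w$ (so integrality is preserved) while only the continuous block is contracted toward $\bar{x}$; convexity of $\bar{\XX}$ then secures $w_t \in \XX$.

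\textbf{Main obstacle.} The delicate step is precisely this feasibility check for the scaled point $w_t$ under the \emph{partial} localization, since $\normlp{\cdot}$ ignores integer coordinates and one cannot naively scale the whole vector toward $\bar{x}$ without risking a fractional integer component. I expect the resolution to use that $\bar{x}$ and $w$ share no constraint preventing a convex combination in the continuous variables once the integer block is held fixed; making this rigorous — rather than the homogeneity bookkeeping, which is routine — is the crux. Once feasibility of $w_t$ is established, the right-hand inequality $\psimeas(\bar{x};\Delta) \geq \innprod{\nabla f(\bar{x})}{\bar{x}-w_t} = \tfrac{\Delta}{\delta}\psimeas(\bar{x};\delta)$ follows because $w_t$ is a feasible competitor in the maximization defining $\psimeas(\bar{x};\Delta)$.
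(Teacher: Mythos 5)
Your treatment of the first three assertions and of the left-hand inequality in \eqref{eq:sandwich} is correct and coincides with the paper's own argument (contraposition of the observations used for \cref{prop:necessary}, attainment by compactness, and monotonicity of the maximum over nested sets). The trouble is the right-hand inequality. You have correctly identified the crux --- feasibility of the scaled competitor --- but your proposed resolution does not work. Freezing the integer block of $w$ and contracting only the continuous block gives $w_t=(\bar{u}+t(u_w-\bar{u}),\,z_w)$, which is \emph{not} on the segment between $\bar{x}=(\bar{u},\bar{z})$ and $w=(u_w,z_w)$ unless $z_w=\bar{z}$, so convexity of $\bar{\XX}$ tells you nothing about $w_t\in\bar{\XX}$. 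Moreover, with the integer block unscaled the value no longer contracts by $t$: by Assumption~\ref{ass:flinearz}, $\innprod{\nabla f(\bar{x})}{\bar{x}-w_t}=t\,\innprod{\nabla f_1(\bar{u})}{\bar{u}-u_w}+\innprod{f_2}{\bar{z}-z_w}$, which is not $t\,\psimeas(\bar{x};\delta)$. So neither the feasibility nor the bookkeeping survives your modification.

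You should also know that the route you first contemplated --- scale the whole maximizer $d_\delta$ by $t=\Delta/\delta$ and use it as a competitor --- is exactly what the paper does, and it verifies only the norm bound $\normlp{t d_\delta}\leq\Delta$ while leaving the membership $\bar{x}+t d_\delta\in\XX$ unaddressed. That membership is a genuine obstruction, not a routine detail. Take $n=2$, $\II=\{2\}$, $\bar{\XX}=\{(u,z):0\leq z\leq 1,\ z\leq u\leq 2\}$, so that $\XX=([0,2]\times\{0\})\cup([1,2]\times\{1\})$, with $\bar{x}=(0,0)$ and $f(u,z)=-\epsilon u-Cz$ for $\epsilon,C>0$. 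Then $\psimeas(\bar{x};\Delta)=\epsilon\Delta>0$ for $\Delta\in(0,1)$, so $\bar{x}$ is not critical, yet $\psimeas(\bar{x};1)=\epsilon+C$ and hence $\psimeas(\bar{x};\nicefrac{1}{2})=\nicefrac{\epsilon}{2}<\nicefrac{1}{2}\,\psimeas(\bar{x};1)$: the maximizer $d_1=(1,1)$ scaled by $\nicefrac{1}{2}$ leaves the integer lattice and no feasible substitute attains the required value. So the step you flagged as the crux cannot be closed by homogeneity alone; any complete argument must control the portion of $\psimeas(\bar{x};\delta)$ carried by the integer displacement (for instance by restricting the scaling to the $z$-fixed slice of $\XX$, which weakens the conclusion). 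In short: your diagnosis of where the difficulty lies is sharper than the paper's own proof, but your repair is incorrect, and the gap is real.
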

\begin{proof}
	If $\nabla f(\bar{x}) = 0$ or $\bar{x}$ is PL-isolated, then it follows from \eqref{eq:psimeas} that $\bar{x}\in\XX$ must be critical according to \cref{def:criticality}.
	This proves the first two assertions.
	The third is obtained similarly, by negating the criticality property for $\bar{x}$.
	
	Let us consider the sandwich inequalities in \eqref{eq:sandwich}.
	Pick an arbitrary but fixed $\delta>0$ and let $\Delta\in(0,\delta]$.
	The remaining case $\Delta=0$ is immediate since $\psimeas(\bar{x};0)=0$ and $\psimeas(\cdot;\cdot) \geq 0$.
	Since $\Delta \leq \delta$, the left inequality directly follows from the definition in \eqref{eq:psimeas}.
	Before addressing the inequality on the right, we rewrite \eqref{eq:psimeas} in the convenient form
	\begin{equation*}
		\forall x\in\XX ,\,
		\forall \Delta\geq 0 \colon
		\quad
		\psimeas(x;\Delta)
		\coloneqq
		\max\left\{
		-\innprod{\nabla f(x)}{d}
		\,|\,
		x+d\in\XX ,\, \normlp{d}\leq\Delta
		\right\}
	\end{equation*}
	and we denote by $d_\Delta$ a (global) maximizer in this definition of $\psimeas(x;\Delta)$, for any given $\Delta\geq0$.
	Then, by construction of $d_\delta$, it must be $\normlp{d_{\delta}} \leq \delta$ and, owing to homogeneity of $\normlp{\cdot}$, we obtain the bound $\normlp{d_\delta \Delta/\delta} \leq \Delta$.
	Finally, together with \eqref{eq:psimeas}, this bound implies the following inequality
	\begin{equation*}
		\psimeas(\bar{x};\Delta)
		=
		- \innprod{\nabla f(\bar{x})}{d_\Delta}
		\geq
		- \innprod{\nabla f(\bar{x})}{d_\delta \Delta/\delta}
		=
		- \frac{\Delta}{\delta} \innprod{\nabla f(\bar{x})}{d_\delta}
		=
		\frac{\Delta}{\delta} \psimeas(\bar{x};\delta),
	\end{equation*}
	concluding the proof.
\end{proof}

In contrast to \cite[Lemma 3.1]{byrd2005convergence},
which is valid for problems with convex constraints,
the noncriticality assumption in \cref{lem:noncritical} is essential for the sandwich inequalities \eqref{eq:sandwich} in our context.
Counterexamples are readily obtained by taking a point $\bar{x}$ that is $\Delta$-critical but not $\delta$-critical for some $\delta > \Delta > 0$.

\section{Sequential mixed-integer linearization method}\label{sec:smil}

\fancyinitial{W}{e} now introduce our iterative numerical scheme based on local mixed-integer linear approximations.
Then, we investigate its behavior and show the convergence to critical points under mild assumptions.

\subsection{Algorithm}\label{sec:algorithm}

The proposed method, detailed in \cref{alg:SMIL}, builds upon a sequence of MILP subproblems of the form \eqref{eq:criticality_milp}.
After solving the trust region linearized subproblem at \cref{step:SMIL:subproblem},
which guarantees that $x^{k+1} \in \XX \cap \lpball(x^k,\Delta_k)$ and
\begin{equation}
	\label{eq:psimeasAlgorithm}
	\psimeas(x^k;\Delta_k)
	=
	\psimeas_k
	\coloneqq
	\innprod{\nabla f(x^k)}{x^k - x^{k+1}}
	\geq
	0,
\end{equation}
the quality of the tentative update $x^{k+1}$ from $x^k$ is tested. 
In particular, we monitor criticality and terminate at \cref{step:SMIL:termination} if (approximately) satisfied. 
Then, a sufficient decrease condition requires at \cref{step:SMIL:suffreduction} that
$
	f(x^{k+1})
	\leq
	\meritf_k - \varrho \psimeas(x^k;\Delta_k)
$
holds for the update $x^{k+1}$ to be accepted;
otherwise, similarly to a backtracking line search procedure, the trust region radius $\Delta_k$ is reduced and the attempt discarded.
With the lens of trust region methods,
\cref{step:SMIL:suffreduction} compares the predicted linearized improvement against the actual change,
with the ratio parameter $\varrho>0$ acting as threshold for acceptance.
In fact, the actual change $a_k$ at \cref{step:SMIL:actualChange} refers to a merit value $\meritf_k$ which coincides with the cost only if $\meritpfac=1$, owing to \cref{step:SMIL:meritUpdate}.
For $\meritpfac\in(0,1)$ the acceptance criterion is less conservative and potentially allows larger radii and faster convergence in practice,
with the objective $f(x^k)$ decreasing nonmonotonically;
see \cite{demarchi2023monotony}.
Finally, the radius is updated at \cref{step:SMIL:radiusUpdate},
where one can apply classical trust region rules or a line search-like reset,
based on $\rho_k \coloneqq a_k / \psimeas_k$,
such as
\begin{equation}
	\label{eq:TRupdate}
	\Delta_{k+1}
	\coloneqq
	\begin{cases}
		\kappa \Delta_k &\text{if}~\rho_k < \varrho_1 ,\\
		\Delta_k &\text{if}~\varrho_1 \leq \rho_k < \varrho_2 ,\\
		\Delta_k/\kappa &\text{if}~\varrho_2 \leq \rho_k \\
	\end{cases}
	\qquad\text{or}\qquad
	\Delta_{k+1}
	\in [\Delta_{\min}, \Delta_{\max}]
	,
\end{equation}
respectively,
given parameters such that $\varrho \leq \varrho_1 < \varrho_2 < 1$ and $0 < \Delta_{\min} \leq \Delta_{\max}$.

\begin{algorithm2e}[tb]
	\DontPrintSemicolon
	\KwIn{$x^0 \in \XX$, $\varepsilon > 0$}
	\KwData{$\Delta_0>0$,
		$\varrho, \kappa \in (0,1)$,
		$\meritpfac\in(0,1]$}
	set $\meritf_0 \gets f(x^0)$\;
	\For{$k = 0,1,2\ldots$}{
		compute $x^{k+1} \in \argmin \left\{ \innprod{\nabla f(x^k)}{x} \,|\, x \in \XX \cap \lpball(x^k,\Delta_k) \right\}$\label{step:SMIL:subproblem}\;
		set $a_k \gets \meritf_k - f(x^{k+1})$ and
		$\psimeas_k \gets \innprod{\nabla f(x^k)}{x^k - x^{k+1}}$\label{step:SMIL:actualChange}\label{step:SMIL:predictChange}\;
		\lIf{$\psimeas_k \leq \varepsilon$\label{step:SMIL:termination}}{%
			\KwReturn $x^k$\algcomment{$\varepsilon$-$\Delta_k$-critical}%
		}
		\If{$a_k < \varrho \psimeas_k$\label{step:SMIL:suffreduction}}{%
			set $\Delta_k \gets \kappa \Delta_k$ and \KwGoTo \cref{step:SMIL:subproblem}\label{step:SMIL:backtrack}
		}
		set $\meritf_{k+1} \gets (1-\meritpfac) \meritf_k + \meritpfac f(x^{k+1})$\label{step:SMIL:meritUpdate}\;
		select $\Delta_{k+1}$ by trust region update \eqref{eq:TRupdate}\label{step:SMIL:radiusUpdate}\;
	}
	\caption{Successive mixed-integer linearization algorithm for \eqref{eq:P}}
	\label{alg:SMIL}
\end{algorithm2e}

We note that the input requirement in \cref{alg:SMIL} of a feasible initial point is not a restriction:
given some $x^0 \notin \XX$, it is enough to project it onto the feasible set by solving a MILP, for instance
$\min_{x\in\XX} \| x - x^0 \|_1$, and run the algorithm from a solution thereof.

\begin{remark}\label{rem:nlp_refinement}
An optional refinement step can also be integrated in \cref{alg:SMIL},
similarly to magic steps \cite[Section 8.2]{birgin2014practical},
equality-constrained quadratic programming (EQP) steps of \cite{byrd2005convergence},
and bound-constrained quadratic programming (BQP) steps of \cite{kirches2022sequential}.
One can also consider NLP steps, by fixing the integer-valued variables and solving an NLP with polyhedral constraints \cite{dambrosio2012storm,belotti2013mixed}.
Without impairing the convergence properties,
this optional step can improve the objective value while maintaining feasibility,
possibly exploiting additional smoothness of $f$,
as discussed in \cref{sec:numresults} below.
\end{remark}

Finally, the termination criterion at \cref{step:SMIL:termination} is based on the approximate criticality notion of \cref{def:approx_criticality},
and supported by the following convergence analysis, which guarantees finite termination for any $\varepsilon > 0$ when $f$ is bounded from below over $\XX$.

\subsection{Convergence analysis}

Assumption~\ref{ass:dfLipschitz} on the smoothness of $f$ is sufficient to prove that the backtracking due to \cref{step:SMIL:backtrack} eventually stops.
In particular, the condition at \cref{step:SMIL:suffreduction} is violated for a sufficiently small $\Delta_k$, after finitely many attempts.
Consequently, \cref{alg:SMIL} is well-defined and produces a sequence of feasible iterates $\{x^k\}$ along with decreasing merit values $\{\meritf_k\}$;
cf. \cref{lem:descentBehavior}.
Then, it remains to prove that the accumulation points are also
critical.
After the asymptotic characterization in \cref{lem:MILasymptotics},
we show that,
in a neighborhood of a feasible but noncritical point,
the subproblem at \cref{step:SMIL:subproblem} eventually generates an acceptable step
while the trust region radius remains bounded away from zero.
Finally, \cref{thm:convergence2critical} synthesizes the main convergence results.

We start with a preliminary result about the reduction implied by \cref{step:SMIL:subproblem},
which follows from Taylor's theorem.
Notice how Assumption~\ref{ass:flinearz} allows to drop the integer-valued variables from the quadratic term of the expansion.

\begin{lemma}\label{lem:partialTaylor}
	Consider
	a function $\func{f}{\XX}{\R}$, a nonempty compact set $\Omega \subseteq \dom f$,
	and suppose Assumptions~\ref{ass:dfLipschitz}--\ref{ass:zbounded} hold. 
	Partitioning $x=(u,z)$ with $u$ and $z$ the real- and integer-valued variables, respectively,
	define
	\begin{equation*}
		L \coloneqq
		\sup
		\left\{
		\frac{\|\nabla f(x_1) - \nabla f(x_2)\|_1}{\|u_1 - u_2\|_\infty}
		\,\middle|\,
		\begin{aligned}  
			&x_1, x_2 \in \Omega , \\
			&u_1 \neq u_2
		\end{aligned}
		\right\} .
	\end{equation*}
	Then $L < \infty$, and for all $x_1, x_2 \in \Omega$ the following upper bound is valid:
	\begin{equation*}
		f(x_2)
		\leq
		f(x_1) + \innprod{\nabla f(x_1)}{x_2-x_1} + \frac{L}{2} \|u_2-u_1\|_{\infty}^2
		.
	\end{equation*}
\end{lemma}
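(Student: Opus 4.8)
The plan is to reduce the estimate to the smooth block $f_1$ and invoke the classical descent lemma, using Assumption~\ref{ass:flinearz} to eliminate the integer variables. First I would note that Assumption~\ref{ass:flinearz} gives $\nabla f(x) = (\nabla f_1(u), f_2)$, so that gradient differences involve only the real-valued block:
\[
\| \nabla f(x_1) - \nabla f(x_2) \|_1 = \| \nabla f_1(u_1) - \nabla f_1(u_2) \|_1 \qquad \forall x_1, x_2 \in \Omega.
\]
The same splitting shows that the linear model absorbs the $z$-dependence exactly, since
\[
f(x_2) - \big( f(x_1) + \innprod{\nabla f(x_1)}{x_2 - x_1} \big) = f_1(u_2) - f_1(u_1) - \innprod{\nabla f_1(u_1)}{u_2 - u_1},
\]
the integer-linear contribution $\innprod{f_2}{z_2 - z_1}$ cancelling. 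Hence the claimed bound is precisely the descent lemma for $f_1$, and the second-order term carries only $u$.

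For finiteness of $L$, I would set $K \coloneqq \proj_u(\Omega)$, which is compact as the continuous image of the compact set $\Omega$ (Assumption~\ref{ass:zbounded} is what guarantees, in the applications, that such a bounded $\Omega$ enclosing all feasible $z$ is available). By Assumption~\ref{ass:dfLipschitz}, $\nabla f_1$ is continuous, hence bounded on $K$, and locally Lipschitz. Since a locally Lipschitz map is Lipschitz on a compact set, I would cover $K$ by finitely many balls on which $\nabla f_1$ is Lipschitz and split the difference quotient into two regimes: for pairs with $\| u_1 - u_2 \|$ below a Lebesgue number $\lambda > 0$ the largest local constant applies, while for pairs with $\| u_1 - u_2 \| \geq \lambda$ the ratio is bounded by $2 \sup_K \| \nabla f_1 \| / \lambda$. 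Equivalence of the $\ell_1$- and $\ell_\infty$-norms then yields a single finite bound, so $L < \infty$.

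For the inequality itself, I would apply Taylor's theorem in integral form to $g(t) \coloneqq f_1(u_1 + t(u_2 - u_1))$, obtaining
\[
f_1(u_2) - f_1(u_1) - \innprod{\nabla f_1(u_1)}{u_2 - u_1} = \int_0^1 \innprod{\nabla f_1(u_t) - \nabla f_1(u_1)}{u_2 - u_1} \, \mathrm{d}t,
\]
where $u_t \coloneqq u_1 + t(u_2 - u_1)$. Bounding the integrand with Hölder's inequality and the Lipschitz estimate $\| \nabla f_1(u_t) - \nabla f_1(u_1) \|_1 \leq L \| u_t - u_1 \|_\infty = L t \| u_2 - u_1 \|_\infty$, and integrating $\int_0^1 t \, \mathrm{d}t = 1/2$, produces the factor $L/2$ and closes the estimate.

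The main obstacle is ensuring the Lipschitz estimate holds along the \emph{entire} segment: the bound on $\| \nabla f_1(u_t) - \nabla f_1(u_1) \|_1$ uses that the pair $(u_1, u_t)$ falls under the supremum defining $L$, which requires $u_t \in K$ for all $t \in [0,1]$, i.e. convexity of $K = \proj_u(\Omega)$. This holds whenever $\Omega$ is convex, the pertinent case here, since the lemma is applied on a compact convex set containing the iterates (available because Assumption~\ref{ass:zbounded} bounds $z$ and the descent property bounds $u$); without such segment containment the estimate with this particular $L$ can fail. Securing it, by convexity of $\Omega$ or by passing to the compact set $\operatorname{conv}(K)$ and its Lipschitz constant, is the decisive point, after which the remaining manipulations are routine.
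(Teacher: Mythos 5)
Your proposal is correct and takes essentially the same route as the paper: Assumption~\ref{ass:flinearz} reduces everything to the smooth block $f_1$ (the paper does this via the intermediate point $\widehat{x}=(u_2,z_1)$ rather than by direct cancellation of the $\innprod{f_2}{z_2-z_1}$ term, which is cosmetic), and the bound is then the descent lemma for $f_1$ with the $\ell_1$/$\ell_\infty$ pairing. The segment-containment issue you flag is genuine, but the paper's own sketch has it too and dismisses it with ``for simplicity, assume $\widehat{x}\in\Omega$''; your explicit fix via convexity of $\Omega$ or passage to $\operatorname{conv}(\proj_u(\Omega))$ is, if anything, the more careful treatment.
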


\begin{proof}
	By Assumption~\ref{ass:flinearz} it is $\nabla f(x) = ( \nabla f_1(u), f_2 )$,
	hence $L<\infty$ coincides with the Lipschitz constant of $\nabla f$ over $\Omega$, finite by Assumption~\ref{ass:dfLipschitz}.
	Then, the asserted inequality follows from Taylor-type arguments,
	such as in the descent lemma \cite[Lemma 2.64]{bauschke2017convex},
	by considering the variation from $x_1$ to $x_2$ along real- and integer-valued components separately.
	We sketch the second part of the proof as follows.
	Define $\widehat{x} \coloneqq (u_2, z_1)$ and, for simplicity, assume $\widehat{x}\in\Omega$.
	Then, Assumption~\ref{ass:flinearz} and Taylor's theorem, e.g. \cite[Lemma 4.3]{kirches2022sequential}, yield the inequality
	\begin{equation*}
		f(x_2)
		=
		f(\widehat{x}) + \innprod{\nabla f(\widehat{x})}{x_2-\widehat{x}}
		\\
		\leq
		f(x_1) + \innprod{\nabla f(x_1)}{\widehat{x}-x_1} + \frac{L}{2} \|\widehat{x}-x_1\|_\infty^2 + \innprod{\nabla f(\widehat{x})}{x_2-\widehat{x}} .
	\end{equation*}
	Finally, exploiting the structure of $\nabla f$ and the definition of $\widehat{x}$, one recognizes the asserted upper bound.
\end{proof}

Next, we employ \cref{lem:partialTaylor} to prove that the iterations of \cref{alg:SMIL} are well-defined.

\begin{lemma}[well-definedness]\label{lem:welldefinedness}
	Consider \eqref{eq:P} under Assumptions~\ref{ass:dfLipschitz}--\ref{ass:zbounded}.
	Then the condition at \cref{step:SMIL:suffreduction} of \cref{alg:SMIL} is violated after a finite number of steps.
\end{lemma}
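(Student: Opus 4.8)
The plan is to show that, at a fixed outer index $k$, the inner backtracking loop halts after finitely many radius reductions. Throughout this loop the point $x^k$ and the merit value $\meritf_k$ are frozen, while the radius is multiplied by $\kappa\in(0,1)$ at each failed attempt, so after $j$ reductions it equals $\kappa^j\Delta_k^{(0)}$, with $\Delta_k^{(0)}$ the radius entering iteration $k$. If the termination test at \cref{step:SMIL:termination} ever fires, the loop stops at once; hence I may assume it does not, so that at every trial the guard $\psimeas_k>\varepsilon$ is in force, which will supply the crucial floor on the predicted reduction. Before estimating, I would fix the compact set $\Omega\coloneqq\XX\cap\lpball(x^k,\Delta_k^{(0)})$ — compact by Assumption~\ref{ass:zbounded} — which contains $x^k$ together with every tentative iterate produced in the loop, since radii only shrink. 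Applying \cref{lem:partialTaylor} on $\Omega$ then yields a single Lipschitz modulus $L<\infty$ valid uniformly across all backtracking trials.

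The core estimate combines three ingredients. First, \cref{lem:partialTaylor} with $x_1=x^k$ and $x_2=x^{k+1}$ gives
\begin{equation*}
	f(x^{k+1}) \leq f(x^k) + \innprod{\nabla f(x^k)}{x^{k+1}-x^k} + \frac{L}{2}\norm{u^{k+1}-u^k}_\infty^2 ,
\end{equation*}
where, decisively, the second-order remainder involves only the real-valued displacement; by the PL-ball constraint $\norm{u^{k+1}-u^k}_\infty \leq \normlp{x^{k+1}-x^k} \leq \Delta_k$, so this term is at most $\tfrac{L}{2}\Delta_k^2$. Second, recognizing $\innprod{\nabla f(x^k)}{x^{k+1}-x^k}=-\psimeas_k$ from \eqref{eq:psimeasAlgorithm} turns the bound into $f(x^{k+1})\leq f(x^k)-\psimeas_k+\tfrac{L}{2}\Delta_k^2$. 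Third, I would invoke the invariant $\meritf_k\geq f(x^k)$, which holds at the start of every iteration: it is an equality for $k=0$, and it is preserved because an accepted step satisfies $a_k\geq\varrho\psimeas_k\geq0$, whence $\meritf_k\geq f(x^{k+1})$, so the convex combination at \cref{step:SMIL:meritUpdate} (with $\meritpfac\in(0,1]$) yields $\meritf_{k+1}\geq f(x^{k+1})$ (this is also part of \cref{lem:descentBehavior}). Together these give $a_k=\meritf_k-f(x^{k+1})\geq\psimeas_k-\tfrac{L}{2}\Delta_k^2$.

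It remains to conclude. Subtracting the acceptance threshold and using $\psimeas_k>\varepsilon$ yields
\begin{equation*}
	a_k - \varrho\,\psimeas_k \geq (1-\varrho)\psimeas_k - \frac{L}{2}\Delta_k^2 > (1-\varrho)\varepsilon - \frac{L}{2}\Delta_k^2 ,
\end{equation*}
which is strictly positive once $\tfrac{L}{2}\Delta_k^2 < (1-\varrho)\varepsilon$, i.e. for all sufficiently small $\Delta_k$. Since $\Delta_k=\kappa^j\Delta_k^{(0)}\to0$, this happens after finitely many reductions, at which point $a_k\geq\varrho\psimeas_k$ and the condition at \cref{step:SMIL:suffreduction} is violated, as claimed.

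I expect the main obstacle to be conceptual rather than computational: making the second-order remainder vanish as $\Delta_k\to0$ hinges on Assumption~\ref{ass:flinearz}, which confines the quadratic term to the continuous variables and lets the PL-ball bound it by $\Delta_k^2$ — the integer displacement is entirely uncontrolled by the radius and must be excluded from the remainder. The two supporting subtleties are securing a uniform $L$ across all trials (handled by the compactness from Assumption~\ref{ass:zbounded}) and establishing the merit invariant $\meritf_k\geq f(x^k)$ so that the merit gap never works against acceptance. Notably, the residual floor $\psimeas_k>\varepsilon$ from the termination test renders the sandwich inequality of \cref{lem:noncritical} unnecessary here, in contrast to the asymptotic analysis to follow.
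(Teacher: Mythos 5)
Your setup — freezing $x^k$ and $\meritf_k$ inside the backtracking loop, extracting a uniform Lipschitz modulus on the compact set $\XX\cap\lpball(x^k,\Delta_k^{(0)})$ via \cref{lem:partialTaylor}, establishing the merit invariant $\meritf_k\geq f(x^k)$, and confining the quadratic remainder to the continuous displacement so that it is $O(\Delta_k^2)$ — coincides with the paper's proof up to the last step. The divergence, and the gap, is in how you obtain a floor on the predicted reduction $\psimeas_k$. You take it from the termination guard, arguing that as long as the loop runs you have $\psimeas_k>\varepsilon$, so $a_k-\varrho\psimeas_k>(1-\varrho)\varepsilon-\tfrac{L}{2}\Delta_k^2$ eventually becomes positive. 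This floor is an artifact of the stopping test, not of the problem: the lemma is invoked downstream with the stopping test disabled. \Cref{thm:convergence2critical} runs the algorithm with $\varepsilon=0$, and the proof of \cref{lem:MILasymptotics} explicitly skips \cref{step:SMIL:termination} to reason about infinite sequences; in both situations your lower bound $(1-\varrho)\varepsilon$ degenerates to $0$ and the argument collapses, because $\psimeas_k=\psimeas(x^k;\Delta_k)$ may itself shrink as the radius is cut and nothing in your estimate prevents it from shrinking as fast as $\Delta_k^2$.

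The paper closes exactly this hole by splitting into cases. If $x^k$ is critical, then $\psimeas_k=0$ for small $\Delta_k$ and the algorithm terminates at \cref{step:SMIL:termination} for any $\varepsilon\geq 0$. If $x^k$ is \emph{not} critical, the sandwich inequality \eqref{eq:sandwich} of \cref{lem:noncritical} gives $\psimeas_k\geq(\Delta_k/\delta)\,\psimeas(x^k;\delta)$ for a fixed $\delta>0$ with $\psimeas(x^k;\delta)>0$: the predicted reduction decays at most \emph{linearly} in $\Delta_k$, while the Taylor remainder decays quadratically, so the ratio $\rho_k=a_k/\psimeas_k$ satisfies $\rho_k\geq 1-O(\Delta_k)\geq\varrho$ once $\Delta_k$ falls below the threshold \eqref{eq:welldefinedness:DeltaThreshold} — with no reference to $\varepsilon$. (This threshold is also reused verbatim in \cref{lem:noncriticalBoundedRadius} to bound the radius away from zero near noncritical accumulation points, which is another reason the $\varepsilon$-free version is needed.) Your closing remark that the termination floor ``renders the sandwich inequality unnecessary'' therefore has it backwards: the sandwich inequality is precisely the ingredient that makes the lemma hold in the form the rest of the analysis requires. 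A secondary, minor point: the inequality $\norm{u^{k+1}-u^k}_\infty\leq\normlp{x^{k+1}-x^k}$ holds for the $\ell_1$ and $\ell_\infty$ choices of $\normlp{\cdot}$ but in general needs the norm-equivalence constant $c_{\textup{PL}}$ the paper carries along.
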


\begin{proof}
	If $x^k$ is critical, then it is $\bar{\Delta}$-critical for some $\bar{\Delta}>0$,
	and $x = x^k$ solves the subproblem at \cref{step:SMIL:subproblem} as soon as $\Delta_k \leq \bar{\Delta}$.
	Regardless of the choice $x^{k+1} \in\XX$, it must yield $\psimeas_k = 0$ and immediate termination of the algorithm, for any $\varepsilon>0$.
	
	If $x^k$ is not critical, then we rely on the sandwich inequalities \eqref{eq:sandwich} given by \cref{lem:noncritical}.
	The claim will follow from the fact that $\meritf_0 = f(x^0)$ and, for each $k\in\N$,
	the acceptance according to \cref{step:SMIL:suffreduction} and the update rule at \cref{step:SMIL:meritUpdate} imply that
	\begin{equation}
		\label{eq:lowerBoundMerit}
		\meritf_{k+1}
		\coloneqq{}
		(1 - \meritpfac) \meritf_k + \meritpfac f(x^{k+1})
		\\
		\geq{}
		(1 - \meritpfac) \left[ f(x^{k+1}) + \varrho \psimeas_k \right] + \meritpfac f(x^{k+1})
		\geq{}
		f(x^{k+1})
		,
	\end{equation}
	proving that the nonmonotone acceptance criterion is less conservative than the monotone one
	(which has $\meritpfac=1$ and $\meritf_k = f(x^k)$ for all $k\in\N$).
	Moreover,
	all attempts satisfy $x^{k+1} \in \XX$ and $\normlp{x^{k+1} - x^k} \leq \Delta_k$.
	In particular, all attempts $x^{k+1}$ remain in a bounded set,
	owing to Assumption~\ref{ass:zbounded} and to the trust region stipulation,
	so there exists a constant $L_k > 0$ related to $\nabla f$ over that set, in the sense of \cref{lem:partialTaylor}.
	Then, all attempts $x^{k+1}$ satisfy the upper bound
	\begin{equation}\label{eq:lips_bound}
		f(x^{k+1}) 
		\leq
		f(x^k) + \innprod{\nabla f(x^k)}{x^{k+1} - x^k} + \frac{L_k}{2} \|u^{k+1} - u^k\|_{\infty}^2
		.
	\end{equation}
	Combined with \eqref{eq:lowerBoundMerit} and then \eqref{eq:psimeasAlgorithm}, \eqref{eq:lips_bound} yields
	\begin{multline}
		\label{eq:rho_bound}
		\rho_k
		\coloneqq
		\frac{a_k}{\psimeas_k}
		=
		\frac{m_k - f(x^{k+1})}{\psimeas_k}{}
		\geq{}
		\frac{f(x^k) - f(x^{k+1})}{\psimeas_k}
		\geq{}
		\frac{\psimeas_k - \frac{L_k}{2} \|u^{k+1} - u^k\|_{\infty}^2}{\psimeas_k} \\
		=
		1 - \frac{L_k}{2} \frac{\|u^{k+1} - u^k\|_{\infty}^2}{\psimeas_k}
		\geq{}
		1 - \frac{L_k \delta_k}{2} \frac{\|u^{k+1} - u^k\|_{\infty}^2}{\Delta_k \psimeas(x^k;\delta_k)}
		,
	\end{multline}
	where the last inequality follows from
	the lower bound in \eqref{eq:sandwich}, for any fixed, sufficiently large $\delta_k>0$ and the associated $\psimeas(x^k;\delta_k)>0$.
	Due to the equivalence of norms in $\spaceX$, there exists a constant
	$c_{\textup{PL}} > 0$ relating $\normlp{x}$ and $\|u\|_\infty$, providing a bound in the form $\|u^{k+1} - u^k\|_\infty \leq c_{\textup{PL}} \normlp{x^{k+1}-x^k}$.
	Therefore, from \eqref{eq:rho_bound} we have that
	\begin{align*}
		2 (1 - \rho_k) \frac{\psimeas(x^k;\delta_k)}{L_k \delta_k}
		\leq
		\frac{\|u^{k+1} - u^k\|_{\infty}^2}{\Delta_k}
		\leq
		\frac{c_{\textup{PL}}^2 \normlp{x^{k+1} - x^k}^2}{\Delta_k}
		\leq
		c_{\textup{PL}}^2 \Delta_k
		,
	\end{align*}
	where the last step uses the trust region stipulation with radius $\Delta_k$ due to \cref{step:SMIL:subproblem} of \cref{alg:SMIL}.
	Since $x^k$, $L_k$ and $\delta_k$ are fixed
	while $\Delta_k$ is decreased,
	the condition $\rho_k \geq \varrho$ becomes satisfied as soon as
	\begin{equation}
		\label{eq:welldefinedness:DeltaThreshold}
		\Delta_k
		\leq
		2\frac{1-\varrho}{c_{\textup{PL}}^2}\frac{\psimeas(x^k;\delta_k)}{L_k \delta_k}
		,
	\end{equation}
	violating the condition at \cref{step:SMIL:suffreduction},
	after finitely many backtracks since $\kappa\in(0,1)$.
\end{proof}

\begin{lemma}[descent behavior]\label{lem:descentBehavior}
	Consider \eqref{eq:P} under Assumptions~\ref{ass:dfLipschitz}--\ref{ass:zbounded}
	and the iterates generated by \cref{alg:SMIL}.
	Then,
	\begin{enumerate}[label=(\roman{*})]
		\item\label{lem:descentBehavior:1}%
		the sequence $\{\meritf_k\}$ is monotonically decreasing and, for every $k\in\N$, one has
		$\psimeas_k \geq 0$ and
		\begin{equation}\label{eq:descentBehavior}
			f(x^{k+1}) + (1-\meritpfac) \varrho \psimeas_k
			\leq
			\meritf_{k+1}
			\leq
			\meritf_k - \meritpfac \varrho \psimeas_k
			.
		\end{equation}
		\item\label{lem:descentBehavior:2}%
		Every iterate $x^k$ is feasible, namely $x^k\in\XX$, and satisfies $f(x^k) \leq f(x^0)$.
	\end{enumerate}
\end{lemma}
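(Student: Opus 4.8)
The plan is to treat both items as direct consequences of the acceptance test at \cref{step:SMIL:suffreduction}, combined with the merit-value recursion at \cref{step:SMIL:meritUpdate}. First I would record that $\psimeas_k\geq 0$ holds for every $k$: since $x^k\in\XX\cap\lpball(x^k,\Delta_k)$ is feasible for the subproblem at \cref{step:SMIL:subproblem}, the minimizer $x^{k+1}$ satisfies $\innprod{\nabla f(x^k)}{x^{k+1}}\leq\innprod{\nabla f(x^k)}{x^k}$, whence $\psimeas_k=\innprod{\nabla f(x^k)}{x^k-x^{k+1}}\geq 0$ as recorded in \eqref{eq:psimeasAlgorithm}. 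By \cref{lem:welldefinedness} the backtracking loop terminates, so the step $x^{k+1}$ that is eventually accepted fulfils $a_k\geq\varrho\psimeas_k$, i.e.\ $\meritf_k-f(x^{k+1})\geq\varrho\psimeas_k$; this single inequality drives the entire argument.

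For item~\ref{lem:descentBehavior:1}, I would substitute the update $\meritf_{k+1}=(1-\meritpfac)\meritf_k+\meritpfac f(x^{k+1})$ into two elementary computations. The upper bound follows by replacing $f(x^{k+1})$ with the acceptance estimate $f(x^{k+1})\leq\meritf_k-\varrho\psimeas_k$, yielding $\meritf_{k+1}\leq\meritf_k-\meritpfac\varrho\psimeas_k$. The lower bound follows by rewriting $\meritf_{k+1}-f(x^{k+1})=(1-\meritpfac)\bigl(\meritf_k-f(x^{k+1})\bigr)$ and bounding the parenthesized difference below by $\varrho\psimeas_k$. Monotonicity of $\{\meritf_k\}$ is then immediate from the upper bound, since $\meritpfac\in(0,1]$, $\varrho\in(0,1)$ and $\psimeas_k\geq 0$ render the subtracted term nonnegative.

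For item~\ref{lem:descentBehavior:2}, feasibility is a straightforward induction: $x^0\in\XX$ by hypothesis, and each accepted $x^{k+1}$ lies in $\XX$ because it solves the constrained subproblem at \cref{step:SMIL:subproblem}. The objective bound is where one must respect the nonmonotone nature of the scheme: rather than claiming that $f$ itself decreases, I would invoke the just-established lower bound in \eqref{eq:descentBehavior}, which gives $f(x^{k+1})\leq\meritf_{k+1}$ (again because $(1-\meritpfac)\varrho\psimeas_k\geq 0$). Chaining this with the monotonicity from item~\ref{lem:descentBehavior:1} and the initialization $\meritf_0=f(x^0)$ yields $f(x^k)\leq\meritf_k\leq\meritf_0=f(x^0)$ for all $k\geq 1$, while the case $k=0$ holds trivially.

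I expect no genuine obstacle: the proof is purely algebraic once \cref{lem:welldefinedness} supplies the accepted-step inequality. The only point demanding care is that the objective may increase along the iterations, so the bound $f(x^k)\leq f(x^0)$ must be routed through the merit values $\{\meritf_k\}$ via the lower bound of \eqref{eq:descentBehavior}, rather than through a (nonexistent) monotonicity of $f$.
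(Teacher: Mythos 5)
Your proof is correct and follows essentially the same route as the paper: both derive the upper bound of \eqref{eq:descentBehavior} by substituting the accepted-step inequality $\meritf_k - f(x^{k+1}) \geq \varrho\psimeas_k$ into the merit update, obtain the lower bound from the same inequality (the paper's \eqref{eq:lowerBoundMerit}), and chain $f(x^k) \leq \meritf_k \leq \meritf_0 = f(x^0)$ for item~\ref{lem:descentBehavior:2}. Your explicit justification of $\psimeas_k \geq 0$ via feasibility of $x^k$ for the subproblem matches the paper's appeal to \eqref{eq:psimeasAlgorithm}.
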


\begin{proof}
	Based on \cref{lem:welldefinedness}, the iterates of \cref{alg:SMIL} are well-defined.
	For \ref{lem:descentBehavior:1}, by the update rule at \cref{step:SMIL:meritUpdate} and the condition at \cref{step:SMIL:suffreduction}, we have that
	\begin{align*}
		\meritf_{k+1}
		\coloneqq{}
		(1-\meritpfac) \meritf_k + \meritpfac f(x^{k+1})
		\leq{}
		(1-\meritpfac) \meritf_k + \meritpfac \left[ \meritf_k - \varrho \psimeas_k \right]
		={}
		\meritf_k - \meritpfac \varrho \psimeas_k
	\end{align*}
	holds upon acceptance.
	The lower bound on $\meritf_{k+1}$ is readily obtained from \eqref{eq:lowerBoundMerit}.
	Then, it follows from \eqref{eq:psimeasAlgorithm} that $\psimeas_k\geq 0$ for each $k\in\N$.
	
	Let us consider \ref{lem:descentBehavior:2} now.
	Owing to $x^0 \in \XX$ and \cref{step:SMIL:subproblem}, every iterate $x^k$ is feasible by construction.
	Then, since $\meritpfac \in(0,1]$ and $\varrho \psimeas_k\geq 0$, it follows from $\meritf_0 \coloneqq f(x^0)$ and \ref{lem:descentBehavior:1} that
	$f(x^k) \leq \meritf_k \leq \ldots \leq \meritf_0 \coloneqq f(x^0) < \infty$.
\end{proof}

\begin{lemma}\label{lem:MILasymptotics}
	Consider \eqref{eq:P} under Assumptions~\ref{ass:dfLipschitz}--\ref{ass:zbounded}
	and the iterates generated by \cref{alg:SMIL}.
	Suppose $\inf_{\XX} f > -\infty$.
	Then,
	\begin{enumerate}[label=(\roman{*})]
		\item\label{lem:MILasymptotics:1}%
		$\{f(x^k)\}$ and $\{\meritf_k\}$ converge, the latter from above, to a finite value $f_\star \geq \inf_{\XX} f$;
		\item\label{lem:MILasymptotics:2}%
		$\sum_{k\in\N} \psimeas_k < \infty$;
		\item\label{lem:MILasymptotics:limPsimeas} $\lim_{k\to\infty} \psimeas_k = 0$.
	\end{enumerate}
\end{lemma}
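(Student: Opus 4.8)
The plan is to build on the descent behavior recorded in \cref{lem:descentBehavior} together with the new hypothesis $\inf_\XX f > -\infty$; the three assertions then follow in turn by elementary facts about monotone sequences and summable nonnegative series, with no genuinely new estimate required. Throughout I assume the algorithm does not terminate after finitely many iterations, as otherwise the asymptotic claims are vacuous.

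For the first assertion, I would start from the monotone decrease of $\{\meritf_k\}$ furnished by \cref{lem:descentBehavior}. The left inequality in \eqref{eq:descentBehavior} gives $f(x^k) \leq \meritf_k$ for every $k$ (with equality at $k=0$ by the initialization $\meritf_0 = f(x^0)$), and feasibility of the iterates yields $f(x^k) \geq \inf_\XX f > -\infty$; hence $\{\meritf_k\}$ is bounded below. A monotonically decreasing, bounded-below sequence converges, say to $f_\star \geq \inf_\XX f$, and ``from above'' is just monotonicity. To pass this to $\{f(x^k)\}$ I would invert the averaging rule at \cref{step:SMIL:meritUpdate}, writing
\[
	f(x^{k+1}) = \frac{1}{\meritpfac}\left( \meritf_{k+1} - (1-\meritpfac)\meritf_k \right),
\]
and take limits: since both $\meritf_{k+1}$ and $\meritf_k$ tend to $f_\star$ and $\meritpfac > 0$, the right-hand side tends to $f_\star$, so $f(x^k) \to f_\star$ as well.

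For the second assertion, I would telescope the right inequality of \eqref{eq:descentBehavior}, namely $\meritpfac \varrho \psimeas_k \leq \meritf_k - \meritf_{k+1}$. Summing over $k = 0, \ldots, N$ collapses the right side to $\meritf_0 - \meritf_{N+1} \leq \meritf_0 - \inf_\XX f$, a bound uniform in $N$. Dividing by $\meritpfac\varrho > 0$ and letting $N \to \infty$ gives $\sum_{k\in\N} \psimeas_k \leq (\meritf_0 - \inf_\XX f)/(\meritpfac\varrho) < \infty$.

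The third assertion is then immediate, since the terms of a convergent series vanish and $\psimeas_k \geq 0$ by \eqref{eq:psimeasAlgorithm}. The only point demanding any care---and thus the closest thing to an obstacle---is the nonmonotone bookkeeping in the first assertion: recovering convergence of the cost values $\{f(x^k)\}$ from that of the merit values $\{\meritf_k\}$ relies on $\meritpfac > 0$ and the affine relation at \cref{step:SMIL:meritUpdate}. In the monotone case $\meritpfac = 1$ this is trivial, but in general it is exactly what separates merit from cost.
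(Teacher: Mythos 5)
Your proposal is correct and follows essentially the same route as the paper: monotone bounded-below convergence of $\{\meritf_k\}$, inversion of the affine merit update at \cref{step:SMIL:meritUpdate} to transfer convergence to $\{f(x^k)\}$, telescoping the right inequality of \eqref{eq:descentBehavior} for summability, and the vanishing of terms of a convergent nonnegative series. The only cosmetic difference is bounding the telescoped sum by $\meritf_0 - \inf_\XX f$ rather than $\meritf_0 - f_\star$, which is immaterial.
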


\begin{proof}
	Based on \cref{lem:welldefinedness}, the iterates of \cref{alg:SMIL} are well-defined
	and we may assume the sequence $\{x^k\}$ is infinite (it suffices to skip \cref{step:SMIL:termination}).
	The assertion in \ref{lem:MILasymptotics:1} regarding $\{\meritf_k\}$ follows from \eqref{eq:descentBehavior} with the premises.
	Then, considering the update rule at \cref{step:SMIL:meritUpdate},
	\begin{equation*}
		\meritpfac f(x^{k+1})
		=
		\meritf_{k+1} - (1-\meritpfac) \meritf_k
		=
		\meritpfac \meritf_k + (\meritf_{k+1} - \meritf_k)
		,
	\end{equation*}
	the convergence of $\{\meritf_k\}$ implies that of $\{f(x^k)\}$ to the same value $f_\star$, since $\meritpfac\in(0,1]$.
	
	Regarding \ref{lem:MILasymptotics:2}, a telescoping argument on \eqref{eq:descentBehavior}, together with \ref{lem:MILasymptotics:1}, yields
	\begin{equation*}
		\meritf_0 - f_\star
		\geq
		\sum_{k=0}^j \meritf_k - \meritf_{k+1}
		\geq
		\meritpfac \varrho \sum_{k=0}^j \psimeas_k
		\geq
		0
		.
	\end{equation*}
	The finite sum follows from the independence of the (finite) upper bound $\meritf_0-f_\star$ on $j\in\N$.
	Finally, assertion \ref{lem:MILasymptotics:limPsimeas} follows from \ref{lem:MILasymptotics:2}, since $\psimeas_k \geq 0$ by \cref{lem:descentBehavior}.
\end{proof}

Owing to \eqref{eq:psimeasAlgorithm},
\cref{lem:MILasymptotics}\ref{lem:MILasymptotics:limPsimeas} gives that $\{ \psimeas(x^k;\Delta_k) \} \to 0$ if $\inf_{\XX} f>-\infty$.
Therefore, if $\{\Delta_k\}$ remains bounded away from zero, we expect accumulation points of $\{x^k\}$ to be critical.
Indeed,
for any noncritical point $x^\infty\in\XX$ and sequence $\{x^k\}$ such that $x^k\to x^\infty$, the trust region radius $\{\Delta_k\}$ can be bounded from below by some positive constant.
In the following result, inspired by \cite[Lemma 4.5]{kirches2022sequential}, by \emph{relative PL-neighborhood} we mean the intersection of a PL-neighborhood of $x^\infty$ in $\spaceX$ with the feasible set $\XX$.

\begin{lemma}\label{lem:noncriticalBoundedRadius}
	Consider \eqref{eq:P} under Assumptions~\ref{ass:dfLipschitz}--\ref{ass:zbounded}.
	Let a point $x^\infty \in \XX$ be not critical for \eqref{eq:P}.
	Then, there exists
	a nonsingleton relative PL-neighborhood $N^\infty$ of $x^\infty$
	such that for any sequence $\{x^k\}\subset N^\infty$ with $x^k \to x^\infty$
	the steps of \cref{alg:SMIL} generate a sequence $\{\Delta_k\}$ bounded away from zero.
\end{lemma}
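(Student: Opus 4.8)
The plan is to show that, near a noncritical $x^\infty$, the mechanism underlying \cref{lem:welldefinedness} operates with \emph{uniform} constants, so that the radius threshold below which steps are accepted is itself bounded away from zero; the update \eqref{eq:TRupdate} then prevents $\{\Delta_k\}$ from collapsing.

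First I would fix a radius $\delta>0$ and invoke \cref{lem:noncritical}: since $x^\infty$ is not critical, it is not a PL-isolated point and $\psimeas(x^\infty;\delta/2)>0$, so there is $w^\infty\in\XX\cap\lpball(x^\infty,\delta/2)$ with $\innprod{\nabla f(x^\infty)}{x^\infty-w^\infty}=\psimeas(x^\infty;\delta/2)=:2\eta>0$. I would then take $N^\infty\coloneqq\XX\cap\lpball(x^\infty,r)$ for a small $r\in(0,\delta/2]$, which is nonsingleton precisely because $x^\infty$ is not PL-isolated. The key observation is that the single witness $w^\infty$ remains admissible for \emph{every} nearby subproblem: for $x\in N^\infty$ the triangle inequality gives $\normlp{w^\infty-x}\le\normlp{w^\infty-x^\infty}+\normlp{x^\infty-x}\le\delta/2+r\le\delta$, so $w^\infty\in\XX\cap\lpball(x,\delta)$ and hence $\psimeas(x;\delta)\ge\innprod{\nabla f(x)}{x-w^\infty}$ by \eqref{eq:psimeas}. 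Since $x\mapsto\innprod{\nabla f(x)}{x-w^\infty}$ is continuous and equals $2\eta$ at $x^\infty$, shrinking $r$ if necessary yields the uniform lower bound $\psimeas(x;\delta)\ge\eta>0$ for all $x\in N^\infty$.

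Next I would secure a uniform Lipschitz constant. Restricting attention to radii $\Delta_k\le\delta$ (larger radii pose no threat to a lower bound), every tentative step satisfies $x^{k+1}\in\XX$ with $\normlp{x^{k+1}-x^k}\le\delta$, hence lies in the compact set $\Omega\coloneqq\{x\in\XX\mid\normlp{x-x^\infty}\le r+\delta\}$, bounded by Assumption~\ref{ass:zbounded}. By Assumptions~\ref{ass:dfLipschitz}--\ref{ass:flinearz} and \cref{lem:partialTaylor}, $\nabla f$ has a finite Lipschitz modulus $L$ over $\Omega$, independent of $k$. Feeding the uniform constants $L$ and $\eta$ (with the fixed choice $\delta_k\equiv\delta$) into the chain \eqref{eq:rho_bound}, together with $\|u^{k+1}-u^k\|_\infty\le c_{\textup{PL}}\normlp{x^{k+1}-x^k}\le c_{\textup{PL}}\Delta_k$, gives the uniform estimate $\rho_k\ge 1-C_0\Delta_k$ with $C_0\coloneqq L\delta c_{\textup{PL}}^2/(2\eta)$, valid for every $x^k\in N^\infty$ with $\Delta_k\le\delta$. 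This shows acceptance at \cref{step:SMIL:suffreduction} (i.e.\ $\rho_k\ge\varrho$) as soon as $\Delta_k\le\Delta_{\mathrm{thr}}\coloneqq\min\{\delta,(1-\varrho)/C_0\}$, recovering \eqref{eq:welldefinedness:DeltaThreshold} with uniform constants, and moreover $\rho_k\ge\varrho_2$ whenever $\Delta_k\le\Delta_{\mathrm{safe}}\coloneqq\min\{\delta,(1-\varrho_2)/C_0\}\le\Delta_{\mathrm{thr}}$.

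Finally I would translate this into a lower bound on $\{\Delta_k\}$ via \eqref{eq:TRupdate}. The reset variant $\Delta_{k+1}\in[\Delta_{\min},\Delta_{\max}]$ makes the claim immediate. For the classical variant, a radius at or below $\Delta_{\mathrm{safe}}$ is accepted without backtracking and is \emph{enlarged} to $\Delta_{k+1}=\Delta_k/\kappa$, since $\rho_k\ge\varrho_2$; and because any backtracking halts at the first trial radius $\le\Delta_{\mathrm{thr}}$, which exceeds $\kappa\Delta_{\mathrm{thr}}\ge\kappa\Delta_{\mathrm{safe}}$, no accepted radius produced by backtracking drops below $\kappa\Delta_{\mathrm{safe}}$; likewise a decrease by $\kappa$ in \eqref{eq:TRupdate} occurs only when $\rho_k<\varrho_1$, hence only when $\Delta_k>\Delta_{\mathrm{safe}}$, so it too leaves $\kappa\Delta_k>\kappa\Delta_{\mathrm{safe}}$. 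A short induction over the iterations with $x^k\in N^\infty$ then yields $\Delta_k\ge\min\{\Delta_{K_0},\kappa\Delta_{\mathrm{safe}}\}>0$, where $K_0$ indexes the first such iterate. I expect the main obstacle to be the uniform lower bound $\psimeas(x;\delta)\ge\eta$: unlike the convex setting of \cite{byrd2005convergence}, here $\psimeas(\cdot;\delta)$ varies both through $\nabla f$ and through the moving, nonconvex feasible region $\XX\cap\lpball(\cdot,\delta)$, so lower semicontinuity is not automatic; the resolving device is precisely the fixed interior witness $w^\infty$ at the strictly smaller radius $\delta/2$, with everything else amounting to a uniform rerun of \cref{lem:welldefinedness} and the monotone radius bookkeeping of \eqref{eq:TRupdate}.
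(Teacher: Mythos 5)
Your proof is correct and follows essentially the same route as the paper's: both reduce the claim to showing that the acceptance threshold \eqref{eq:welldefinedness:DeltaThreshold} from \cref{lem:welldefinedness} is uniformly bounded below on a relative PL-neighborhood of $x^\infty$, so that neither the backtracking at \cref{step:SMIL:backtrack} nor the update \eqref{eq:TRupdate} can drive $\Delta_k$ below $\kappa$ times that threshold. Your fixed-witness argument yielding the uniform bound $\psimeas(x;\delta)\geq\eta$ on $N^\infty$ is a welcome explicit justification of a step the paper's proof leaves implicit (it simply evaluates the threshold at $x^\infty$, tacitly relying on this kind of lower semicontinuity).
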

\begin{proof}
	Owing to noncriticality and \cref{lem:noncritical}, $x^\infty$ is not a PL-isolated point of $\XX$, hence there exists
	a nonempty nonsingleton relative PL-neighborhood $N^\infty$ of $x^\infty$.
	Moreover, the sandwich inequalities \eqref{eq:sandwich} hold for any fixed $\delta_\infty>0$ and the associated $\psimeas(x^\infty;\delta_\infty)>0$.
	Denote $L_\infty$ the Lipschitz constant of $\nabla f$ over $N^\infty$,
	finite by Assumption~\ref{ass:dfLipschitz}.
	Since the trust region radius is reduced when needed for sufficient decrease, and not otherwise,
	$\Delta_k$
	is not reduced below the uniform bound given by $\kappa \Delta_\infty$,
	where
	\begin{equation*}
	\Delta_\infty
	\coloneqq
	2\frac{1-\varrho}{c_{\textup{PL}}^2}\frac{\psimeas(x^\infty;\delta_\infty)}{L_\infty \delta_\infty}
	>
	0
	\end{equation*}
	is the worst-case threshold for acceptance at \cref{step:SMIL:suffreduction};
	see \eqref{eq:welldefinedness:DeltaThreshold}.
	Analogous arguments apply with the reset rule for $\Delta_{k+1}$ in \eqref{eq:TRupdate}, concluding the proof.
\end{proof}

By virtue of \cref{lem:noncriticalBoundedRadius},
the trust region radius does not vanish in the vicinity of noncritical points,
hence sufficiently small steps can always be accepted there,
which deliver sufficient reduction and improvement.
We are now in a position to prove our main convergence result,
connecting accumulation points and criticality.
The following \cref{thm:convergence2critical} is akin to \cite[Thm 3.8]{byrd2005convergence} and \cite[Thm 4.2]{kirches2022sequential}.

\begin{theorem}\label{thm:convergence2critical}
	Consider \eqref{eq:P} under Assumptions~\ref{ass:dfLipschitz}--\ref{ass:zbounded}
	and let $x^0 \in \XX$ be an arbitrary but fixed point.
	Then, relative to the iterates generated by \cref{alg:SMIL} with $\varepsilon=0$,
	one of the following mutually exclusive outcomes must occur:
	\begin{enumerate}[label=(\roman{*})]
		\item\label{thm:convergence2critical:1}%
		$\lim_{k\to\infty} f(x^k) = -\infty$;
		\item\label{thm:convergence2critical:2}%
		\cref{alg:SMIL} terminates at a critical point, that is, $x^k$ solves the subproblem at \cref{step:SMIL:subproblem} and $\psimeas_k = 0$ for some $k\in\N$;
		\item\label{thm:convergence2critical:3}%
		\cref{alg:SMIL} generates an infinite sequence of iterates $\{ x^k \}$  with decreasing merit values $\{\meritf_k\}$.
		If $\{x^k\}$ has an accumulation point $x^\infty$, then $x^\infty$ is feasible and critical for \eqref{eq:P}.
	\end{enumerate}
\end{theorem}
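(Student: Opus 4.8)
The plan is to organize the argument as a single case distinction that simultaneously establishes that the three outcomes are exhaustive and mutually exclusive. By \cref{lem:welldefinedness} the algorithm is well-defined, so either it terminates at some finite index $k$ through \cref{step:SMIL:termination}, or it produces an infinite sequence. Since $\varepsilon = 0$ and $\psimeas_k \geq 0$ for every $k$ by \cref{lem:descentBehavior}, finite termination forces $\psimeas_k = 0$; by \eqref{eq:psimeasAlgorithm} this means $x^k$ solves the subproblem at \cref{step:SMIL:subproblem} with zero optimal value, i.e. $x^k$ is $\Delta_k$-critical, hence critical. This is outcome \ref{thm:convergence2critical:2}.

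In the infinite case, \cref{lem:descentBehavior} guarantees that $\{\meritf_k\}$ is monotonically decreasing, so it either diverges to $-\infty$ or converges to a finite limit; these two sub-cases, together with finite termination, are plainly exhaustive and disjoint. If $\meritf_k \to -\infty$, then since \eqref{eq:descentBehavior} yields $f(x^k) \leq \meritf_k$ for all $k$, we obtain $\lim_k f(x^k) = -\infty$, which is outcome \ref{thm:convergence2critical:1}; I would note that this precludes any accumulation point, since continuity of $f$ would otherwise force a finite subsequential value, thereby securing mutual exclusivity with outcome \ref{thm:convergence2critical:3}. Otherwise $\{\meritf_k\}$ is bounded below, hence convergent to some finite $f_\star$; reproducing the telescoping argument of \cref{lem:MILasymptotics} on \eqref{eq:descentBehavior} gives $\sum_k \psimeas_k < \infty$ and therefore $\psimeas_k \to 0$. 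It then remains to prove that every accumulation point is critical, which is the substance of outcome \ref{thm:convergence2critical:3}.

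For the latter I would argue by contradiction. Let $x^\infty$ be an accumulation point with $x^{k_j} \to x^\infty$, and suppose $x^\infty$ is not critical. \cref{lem:noncriticalBoundedRadius} then provides a relative PL-neighborhood of $x^\infty$ on which the trust region radii stay bounded away from zero, so $\Delta_{k_j} \geq \underline{\Delta}$ for some $\underline{\Delta} > 0$ and all large $j$. Since enlarging the radius can only enlarge the feasible set in the maximization defining $\psimeas$, we have $\psimeas(x^{k_j}; \underline{\Delta}) \leq \psimeas(x^{k_j}; \Delta_{k_j}) = \psimeas_{k_j} \to 0$. Granting lower semicontinuity of $x \mapsto \psimeas(x; \underline{\Delta})$ at $x^\infty$, passing to the limit yields $\psimeas(x^\infty; \underline{\Delta}) \leq \liminf_j \psimeas(x^{k_j}; \underline{\Delta}) = 0$, contradicting the fact from \cref{lem:noncritical} that a noncritical point satisfies $\psimeas(x^\infty; \Delta) > 0$ for every $\Delta > 0$. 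Hence $x^\infty$ is critical, with feasibility inherited from \cref{lem:descentBehavior}.

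The main obstacle is precisely this lower semicontinuity of $x \mapsto \psimeas(x; \underline{\Delta})$ at $x^\infty$, which is delicate because the constraint set $\XX \cap \lpball(x, \underline{\Delta})$ moves with $x$ while $\XX$ is nonconvex owing to the integrality constraints. My plan to resolve it is to exploit that the integer-valued components take discrete values: since $x^{k_j} \to x^\infty$, the integer parts eventually coincide with those of $x^\infty$, so along the tail the relevant slice of $\XX$ is a fixed convex polyhedron. On this slice the perturbed feasible set varies continuously and $\nabla f$ is continuous, so standard parametric-optimization arguments deliver the required semicontinuity; equivalently, one can transport the descent direction guaranteed at $x^\infty$ by \cref{lem:noncritical} to the nearby iterates $x^{k_j}$ that share the same integer assignment, producing a uniform positive lower bound on $\psimeas(x^{k_j}; \underline{\Delta})$ and hence the sought contradiction.
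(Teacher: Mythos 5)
Your proposal is correct and follows essentially the same route as the paper: well-definedness and the descent lemma dispose of outcomes \ref{thm:convergence2critical:1}--\ref{thm:convergence2critical:2}, and criticality of accumulation points is obtained by contradiction from \cref{lem:noncriticalBoundedRadius} together with \cref{lem:noncritical}. The only structural difference is how the final contradiction is packaged: the paper shows $\psimeas_k \geq \varepsilon_\infty > 0$ along the subsequence and telescopes \eqref{eq:descentBehavior} to force $f(x^k) \to -\infty$, contradicting $f(x^{k_j}) \to f(x^\infty)$, whereas you first derive $\psimeas_k \to 0$ from summability and then push the limit into $\psimeas(\cdot;\underline{\Delta})$; these are equivalent, and both hinge on the same stability property of the criticality measure that you rightly single out as the crux. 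One caveat on your first proposed resolution of that crux: the maximization defining $\psimeas(x^{k_j};\underline{\Delta})$ is \emph{not} confined to the slice $z = z^\infty$, because the PL-ball does not constrain the integer components of the competitor $w$; the subproblem's feasible set is a union of polyhedral slices rather than a single fixed polyhedron, so the ``fixed convex slice'' argument does not directly apply. Your fallback is the right mechanism and needs no slice argument at all: pick $w$ attaining $\psimeas(x^\infty;\underline{\Delta}/2) = \varepsilon_0 > 0$ (positive by \cref{lem:noncritical}); for large $j$ the subadditivity of $\normlp{\cdot}$ gives $w \in \XX\cap\lpball(x^{k_j},\underline{\Delta})$, and continuity of $\nabla f$ gives $\innprod{\nabla f(x^{k_j})}{x^{k_j}-w} \geq \varepsilon_0/2$, hence the uniform positive lower bound on $\psimeas_{k_j}$. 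This is precisely the (tersely stated) argument in the paper's proof.
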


\begin{proof}
	We need to consider only outcome \ref{thm:convergence2critical:3} because in the other cases we either observe \ref{thm:convergence2critical:1} lower unboundedness or \ref{thm:convergence2critical:2} obtain a critical point by virtue of the termination condition.

	Since the backtracking terminates finitely by \cref{lem:welldefinedness},
	and each attempt
	is feasible for the subproblem at \cref{step:SMIL:subproblem},
	we obtain by induction over the iterations that $x^k\in\XX$ for all $k\in\N$.
	Furthermore, by \cref{lem:descentBehavior}
	all iterations yield an improvement in terms of merit
	---and the objective value eventually decreases as well, owing to \cref{lem:MILasymptotics}.
	It remains to show that every accumulation point of $\{x^k\}$ is critical.

	Seeking a contradiction, we assume that an accumulation point $x^\infty$ of the sequence $\{x^k\}$ is not critical.
	Let us denote $\{x^k\}_{k\in K}$ a subsequence such that
	$x^k \to_K x^\infty$
	and, possibly relabeling, restrict $\{x^k\}_{k\in K}$ to the PL-neighborhood $N^\infty$ defined in \cref{lem:noncriticalBoundedRadius},
	so that $x^k \in N^\infty$ for all $k\in K$.

	Now we show that $\{\psimeas_k\}_{k\in K}$ remains bounded away from zero.
	First of all, by noncriticality of $x^\infty$,
	\cref{lem:noncritical} gives that $\innprod{\nabla f(x^\infty)}{x^\infty - x} \geq \varepsilon_0$ for some $x \in \XX$ and $\varepsilon_0 > 0$.
	Then, recall the continuity of $\nabla f$, the minimality property of $x^{k+1}$ for $\innprod{\nabla f(x^k)}{\cdot - x^k}$ within a radius $\Delta_k$ (bounded away from zero by \cref{lem:noncriticalBoundedRadius}), and the convergence $x^k \to x^\infty$.
	These facts imply the existence of some $\varepsilon_{\infty} > 0$ such that $\psimeas_k \geq \varepsilon_{\infty}$ holds for all $k \in K$.	
	Combined with \eqref{eq:descentBehavior},
	and for any $k_0\in K$, the boundedness of $\{\psimeas_k\}_{k\in K}$ away from zero results in
	\begin{multline*}
		f(x^k)
		\leq{}
		\meritf_k
		={}
		\meritf_{k_0} + \sum_{j=k_0}^{k-1} (\meritf_{j+1} - \meritf_j)
		\leq{}
		\meritf_{k_0} - \sum_{j=k_0}^{k-1} \meritpfac \varrho \psimeas_j
		\\
		\leq{}
		\meritf_{k_0} - \sum_{j=k_0}^{k-1} \meritpfac \varrho \varepsilon_{\infty}
		={}
		\meritf_{k_0} - (k-k_0) \meritpfac \varrho \varepsilon_{\infty}
		\to
		- \infty
	\end{multline*}
	as $k \to_K \infty$.
	This fact contradicts the (subsequential) convergence $f(x^k) \to_K f(x^\infty)$ by
	$x^k \to_K x^\infty$,
	thus showing that every accumulation point $x^\infty$ of $\{x^k\}$ is critical.
\end{proof}

Finally, as a direct corollary to \cref{thm:convergence2critical},
finite termination with an $\varepsilon$-critical point can be established for any $\varepsilon > 0$, if $\inf_{\XX} f > - \infty$, via \cref{lem:MILasymptotics}.

\section{Numerical Examples}\label{sec:numresults}

\fancyinitial{W}{e} report on numerical results for two example problems:
the discrete-time optimal control of a hybrid system with hysteresis
and the concurrent design and operation planning of a processing network.

To exploit the problem structure and the available technology, we consider also a variant of \cref{alg:SMIL} with the NLP refinement step suggested in \cref{rem:nlp_refinement}.
In practice,
the real-valued variables are perfected by solving the NLP obtained from \eqref{eq:P} by fixing the integer-valued variables at their current value.
To avoid solving many NLPs, this procedure is invoked only when the integer-valued variables remain untouched by a successful update.
More precisely, the NLP refinement may take place before executing \cref{step:SMIL:meritUpdate}, only if $z^{k+1} = z^k$.
An improved $u^{k+1}$ is obtained by solving (up to $\varepsilon$-stationarity) the
problem arising from \eqref{eq:P} with the additional constraint $z = z^k$.

The numerical results are generated with a prototype MATLAB implementation of \cref{alg:SMIL}, adopting \texttt{intlinprog} for the MILP subproblems arising at \cref{step:SMIL:subproblem} and \texttt{fmincon} for the optional NLP subproblems.%
\footnote{The prototype implementation uses default options except the following: integer tolerance $10^{-6}$, absolute gap, relative gap and constraint tolerances $10^{-8}$ for \texttt{intlinprog}; algorithm \texttt{sqp}, objective gradient provided, max iterations $1000$, optimality and step tolerance $\varepsilon$, and constraint tolerance $10^{-8}$ for \texttt{fmincon}. Code run in MATLAB R2022b, update 10.}
The default algorithmic parameters are set as follows:
tolerance $\varepsilon = 10^{-8}$,
initial radius $\Delta_0 = 1$,
sufficient decrease parameter $\varrho = \nicefrac{1}{10}$,
reduction factor $\kappa = \nicefrac{1}{2}$,
and monotonicity parameter $\meritpfac = \nicefrac{1}{2}$.
\cref{step:SMIL:radiusUpdate} uses the trust region update rule \eqref{eq:TRupdate} with $\varrho_1 = \varrho$ and $\varrho_2 = 2 \varrho$.
The partial localization $\normlp{\cdot}$ uses the $\ell_\infty$-norm, namely $\normlp{x} \coloneqq \max_{i\notin\II} |x_i|$.
If an infeasible starting point $x^0$ is provided, a feasible one is computed by solving $\min_{x\in\XX} \| x - x^0 \|_1$.
Moreover, the execution of \cref{alg:SMIL} is stopped if the MILP solver fails, either by declaring the subproblem at \cref{step:SMIL:subproblem} infeasible or resulting in a negative criticality measure $\psimeas_k$ at \cref{step:SMIL:predictChange}.
These occurrences are both due to numerical issues within the MILP solver; in fact, all subproblems are guaranteed to be feasible and the criticality measure to be nonnegative, by construction, with \emph{exact} MILP solves.

All source codes used to generate and analyze the results in \cref{sec:numresults} have been archived on Zenodo at \href{http://dx.doi.org/10.5281/zenodo.10007957}{\textsc{doi}:~10.5281/zenodo.10007957}.

\subsection{Car with hysteretic turbo charger}

In this section we apply \cref{alg:SMIL} to a numerical example for the point-to-point optimal control problem of a car with turbo charger.
Inspired by \cite[Section V]{nurkanovic2022continuous}, we consider a double-integrator point mass model equipped with a turbo accelerator subject to hysteresis effects.

The car is described by its position $q(t)$, velocity $v(t)$ and turbo state $w(t) \in \{0, 1\}$ for each time $t\in [0,T]$, $T > 0$.
The control variables are the acceleration $a(t)$ and brake $b(t)$ pedals.
The turbo is activated when the velocity exceeds $v \geq v_+ \coloneqq 10$ and is deactivated when it falls below $v \leq v_- \coloneqq 5$; when it is on, it makes the nominal thrust three times more effective.
In contrast, the braking force remains unaffected by the turbo state.
Summarizing, the state vector reads as $(q, v, w) \in \R^3$ with dynamics $\dot{q} = v$, $\dot{v} = f-b$, where the thrust $f$ has two modes of operation, depending on the turbo state, described by $f(a) = a$ if $w=0$ and $f(a) = 3 a$ if $w=1$.
The turbo state $w$ behaves according to the hysteresis characteristic described above.
The acceleration control is bounded by $0 \leq a \leq a_{\max}$, brake control by $0 \leq b \leq b_{\max}$, and the velocity by $|v| \leq v_{\max}$.
The initial state is $q(0) = v(0) = w(0) = 0$.
Given a final time $T>0$ and position $q_{\text{end}} > 0$ and parameters $\alpha_a,\alpha_b \geq 0$, we are interested in minimizing the control effort $\alpha_a \int_0^T a^2(t)\mathrm{d}t + \alpha_b \int_0^T b^3(t)\mathrm{d}t$ to achieve $q(T) = q_{\text{end}}$ and $v(T) = 0$.

By direct discretization
one can formulate this hybrid optimal control problem
in the form \eqref{eq:P},
by rewriting logical propositions as big-M constraints.
Let us consider a time grid $0 = t_0 < \ldots < t_N = T$, with $N$ intervals of size $h = T/N$, and state and control approximations, denoted $q_k,v_k,\ldots,b_k$, such that $q_k \approx q(t_k)$ for each $k=0,\ldots,N$.
Auxiliary variables $f_k$ for the thrust $f$ are included for clarity.
Then, approximated via the trapezoidal rule, the discretized integral objective is
\begin{equation}\label{eq:turbo:cost}
	\minimize\quad
	\alpha_a h \sum_{k=0}^{N-1} \frac{a_k^2 + a_{k+1}^2}{2}
	+
	\alpha_b h \sum_{k=0}^{N-1} \frac{b_k^3 + b_{k+1}^3}{2}
	.
\end{equation}
With finite differences, the discrete-time dynamics read, for $k=0,\ldots,N-1$,
\begin{align}
	&\frac{q_{k+1} - q_k}{h} = \frac{v_{k+1} + v_k}{2} , &
	&\frac{v_{k+1} - v_k}{h} = \frac{f_{k+1} + f_k}{2} - \frac{b_{k+1} + b_k}{2} .
\end{align}
Here, the thrust $f_k$ can be encoded by a linear big-M model, instead of \emph{if-else} cases, as
\begin{align}
	&f_k - a_k \leq{} M w_k ,&
	&f_k - a_k \geq{} - M w_k ,\\
	&f_k - 3 a_k \leq{} M (1-w_k) ,&
	&f_k - 3 a_k \geq{} - M (1-w_k) \nonumber
\end{align}
where $M>0$ is fixed and large enough.
The hysteresis characteristic corresponds to logical propositions describing activation and deactivation conditions, in the form
\begin{align*}
	&w_k = 0 ~\wedge~ v_k > v_+ ~\Rightarrow~ w_{k+1} = 1 ,&
	&w_k = 1 ~\wedge~ v_k < v_- ~\Rightarrow~ w_{k+1} = 0 ,\\
	&w_k = 0 ~\wedge~ v_k < v_+ ~\Rightarrow~ w_{k+1} = 0 ,&
	&w_k = 1 ~\wedge~ v_k > v_- ~\Rightarrow~ w_{k+1} = 1 \nonumber
\end{align*}
for $k=0,\ldots,N-1$, with $\vee$ denoting the logical ``and''.
Rewriting as clauses and using De Morgan's laws, these can be cast as algebraic big-M constraints, reading
\begin{align}
	&v_k \leq v_+ + M (w_k + w_{k+1}) ,&
	&v_k \geq v_- - M (2 - w_k - w_{k+1}) ,\\
	&v_k \geq v_+ - M (w_k + 1 - w_{k+1}) ,&
	&v_k \leq v_- + M (1 - w_k + w_{k+1}) .\nonumber
\end{align}
Finally, bounds are imposed only on the time grid, analogously to boundary conditions, as
\begin{align}\label{eq:turbo:bounds}
	&0 \leq a_k \leq a_{\max} ,&
	&0 \leq b_k \leq b_{\max} ,&
	&- v_{\max} \leq v_k \leq v_{\max} &
	&\text{for}~k=0,\ldots,N ,\\
	&q_0 = v_0 = w_0 = 0 ,&
	&q_N = q_{\text{end}} ,&
	&v_N = 0 .\label{eq:turbo:bounds2}
\end{align}
Overall, a discretization with $N$ intervals results in a problem \eqref{eq:P} according to \eqref{eq:turbo:cost}--\eqref{eq:turbo:bounds2} with
$5(N+1)$ real-valued and $(N+1)$ binary-valued  decision variables,
$2N$ linear equality constraints,
$4(N+1) + 4N$ linear inequality constraints,
and simple bounds.

\paragraph*{Simulations}
We consider instances with $N\in\{25,50,100\}$ discretization intervals and
parameters $\alpha_a=1$, $\alpha_b=10^{-2}$, $q_{\text{end}}=150$, $T=10$, $a_{\max} =5$, $b_{\max}=10$, $v_{\max} = 25$, and $M=20$.
For each instance we run \cref{alg:SMIL} starting from 100 initial guesses obtained by sampling a normal distribution with zero mean and standard deviation 10.

\Cref{alg:SMIL} successfully returned in all cases with a critical point (within the specified tolerance).
Solutions are depicted as trajectories in \cref{fig:turbo} and computational results are summarized in \cref{tab:turbo}.
Out of 100 runs, at most 4 different solutions are returned by \cref{alg:SMIL} for each instance.
These have essentially the same pattern but slightly perturbed turbo activation point,
as shown by \cref{fig:turbo} for $N=100$.
Moreover, \cref{tab:turbo} highlights that not only all solutions have very similar objective value, but also that the solution process is relatively consistent.
In particular, the number of iterations and MILP subproblems are in narrow ranges, independent on the discretization level, whereas the runtime increases with finer discretizations, mostly because MILP subproblems become larger in size.
These observations are illustrated in \cref{fig:turbo_runtime}, which also shows that lower objective values are attained with finer discretizations.
Overall, despite the problem size (up to $n=606$ when $N=100$) and our straightforward implementation without safeguards for numerical errors,
\cref{alg:SMIL} was able to achieve accurate results in a few seconds.

\begin{figure}[tb]
	\centering%
	\begin{tikzpicture}
	
	\begin{axis}[%
		width=0.25\columnwidth,
		height=0.23\columnwidth,
		at={(0.0,0.0)},
		scale only axis,
		xmin=0,
		xmax=10,
		xlabel style={font=\color{white!15!black}},
		xlabel={$t$},
		ymin=0,
		ymax=150,
		ylabel style={font=\color{white!15!black},yshift=-5pt},
		ylabel={$q$},
		axis background/.style={fill=white},
		axis x line*=bottom,
		axis y line*=left
		]
		\addplot [color=black, forget plot]
		table[row sep=crcr]{%
			0	150\\
			10	150\\
		};
		\addplot [color=blue, only marks, mark=*, mark size=1, mark options={solid, blue}, forget plot]
		table[]{tikz/data/turbo_N100_100instances_states/turbo_N100_100instances_states-1.tsv};
		\addplot [color=red, only marks, mark=o, mark options={solid, red}, forget plot]
		table[]{tikz/data/turbo_N100_100instances_states/turbo_N100_100instances_states-2.tsv};
	\end{axis}
	
	\begin{axis}[%
		width=0.25\columnwidth,
		height=0.23\columnwidth,
		at={(0.34\columnwidth,0)},
		scale only axis,
		xmin=0,
		xmax=10,
		xlabel style={font=\color{white!15!black}},
		xlabel={$t$},
		ymin=0,
		ymax=27,
		ylabel style={font=\color{white!15!black},yshift=-5pt},
		ylabel={$v$},
		axis background/.style={fill=white},
		axis x line*=bottom,
		axis y line*=left
		]
		\addplot [color=black, forget plot]
		table[row sep=crcr]{%
			0	5\\
			10	5\\
		};
		\addplot [color=black, forget plot]
		table[row sep=crcr]{%
			0	10\\
			10	10\\
		};
		\addplot [color=black, forget plot]
		table[row sep=crcr]{%
			0	25\\
			10	25\\
		};
		\addplot [color=blue, only marks, mark=*, mark size=1, mark options={solid, blue}, forget plot]
		table[]{tikz/data/turbo_N100_100instances_states/turbo_N100_100instances_states-6.tsv};
		\addplot [color=red, only marks, mark=o, mark options={solid, red}, forget plot]
		table[]{tikz/data/turbo_N100_100instances_states/turbo_N100_100instances_states-7.tsv};
	\end{axis}
	
	\begin{axis}[%
		width=0.25\columnwidth,
		height=0.23\columnwidth,
		at={(0.68\columnwidth,0.0)},
		scale only axis,
		xmin=0,
		xmax=10,
		xlabel style={font=\color{white!15!black}},
		xlabel={$t$},
		ymin=-12,
		ymax=14,
		ylabel style={font=\color{white!15!black},yshift=-10pt},
		ylabel={$f-b$},
		axis background/.style={fill=white},
		axis x line*=bottom,
		axis y line*=left
		]
		\addplot [color=black, forget plot]
		table[row sep=crcr]{%
			0	5\\
			10	5\\
		};
		\addplot [color=black, forget plot]
		table[row sep=crcr]{%
			0	0\\
			10	0\\
		};
		\addplot [color=black, forget plot]
		table[row sep=crcr]{%
			0	-10\\
			10	-10\\
		};
		\addplot [color=blue, only marks, mark=*, mark size=1, mark options={solid, blue}, forget plot]
		table[]{tikz/data/turbo_N100_100instances_states/turbo_N100_100instances_states-11.tsv};
		\addplot [color=red, only marks, mark=o, mark options={solid, red}, forget plot]
		table[]{tikz/data/turbo_N100_100instances_states/turbo_N100_100instances_states-12.tsv};
	\end{axis}
\end{tikzpicture}
	\caption{Solutions for the problem of a turbo car with $N=100$ discretization intervals: superimposed states and control trajectories of all test runs, with thresholds (black solid lines) and different markers when the turbo is active (red circle) or inactive (blue dot).}%
	\label{fig:turbo}%
\end{figure}

\begin{table}[tb]
	\centering%
	\caption{%
		Summary of results for the problem of a turbo car with $N\in\{25,50,100\}$ discretization intervals.
		Statistics (minimum, maximum, and quartiles over different initial guesses) for the final objective value, runtime, and number of iterations and MILP subproblems.
		\cref{alg:SMIL}'s runtime does not include the polyhedral projection of infeasible initial guesses, provided for comparison.
	}%
	\label{tab:turbo}%
	\begin{tabular}{cc|ccccc}
		& & min & q(25\%) & median & q(75\%) & max \\
		\midrule
		$N=25$ & objective value & 71.20 & 71.20 & 71.20 & 71.20 & 74.35 \\
		& \# iterations & 59 & 61 & 63 & 64 & 69 \\
		& \# MILPs & 125 & 130 & 131 & 133 & 141 \\
		& runtime [s] & 0.44 & 0.58 & 0.62 & 0.67 & 0.91 \\
		initial projection & runtime [s] & 0.06 & 0.13 & 0.15 & 0.18 & 0.29 \\
		\midrule
		$N=50$ & objective value & 69.46 & 69.80 & 69.80 & 69.80 & 71.74 \\
		& \# iterations & 60 & 65 & 66 & 68 & 84 \\
		& \# MILPs & 128 & 134 & 136 & 137 & 163 \\
		& runtime [s] & 1.01 & 1.30 & 1.37 & 1.44 & 1.93 \\
		initial projection & runtime [s] & 0.26 & 0.56 & 0.66 & 0.76 & 1.71 \\
		\midrule
		$N=100$ & objective value & 68.66 & 68.95 & 69.6 & 69.6 & 69.6 \\
		& \# iterations & 66 &68 & 70 & 71 & 73 \\
		& \# MILPs & 134 & 136 & 137 & 139 & 141 \\
		& runtime [s] & 2.84 & 3.26 & 3.43 & 3.53 & 4.34 \\
		initial projection & runtime [s] & 1.40 & 3.03 & 3.82 & 7.49 & 124.08
	\end{tabular}%
\end{table}

\begin{figure}[tb]
	\centering%
	\definecolor{mycolor_N25}{rgb}{0.99325,0.90616,0.14394}%
\definecolor{mycolor_N50}{rgb}{0.12815,0.56511,0.55089}%
\definecolor{mycolor_N100}{rgb}{0.26700,0.00487,0.32942}%

\begin{tikzpicture}

\begin{axis}[%
width=0.36\columnwidth,
height=0.23\columnwidth,
at={(0,0)},
scale only axis,
xmin=68,
xmax=75,
xlabel style={font=\color{white!15!black}},
xlabel={objective value},
ymode=log,
ymin=0.4,
ymax=10,
yminorticks=true,
ylabel style={font=\color{white!15!black}},
ylabel={runtime [s]},
axis background/.style={fill=white},
legend style={legend cell align=left, align=left, draw=none, anchor=south, at={(1,1.05)}},
legend style={/tikz/every even column/.append style={column sep=0.5cm}},
legend columns=-1
]
\addplot [color=mycolor_N25, only marks, mark size=1.9pt, mark=*, mark options={solid, mycolor_N25}]
  table[]{tikz/data/turbo_runtime/turbo_runtime-1.tsv};
\addlegendentry{$N=25$}

\addplot [color=mycolor_N50, only marks, mark size=1.7pt, mark=*, mark options={solid, mycolor_N50}]
  table[]{tikz/data/turbo_runtime/turbo_runtime-2.tsv};
\addlegendentry{$N=50$}

\addplot [color=mycolor_N100, only marks, mark size=1.5pt, mark=*, mark options={solid, mycolor_N100}]
  table[]{tikz/data/turbo_runtime/turbo_runtime-3.tsv};
\addlegendentry{$N=100$}

\end{axis}

\begin{axis}[%
width=0.36\columnwidth,
height=0.23\columnwidth,
at={(0.39\columnwidth,0)},
scale only axis,
xmin=120,
xmax=170,
xmode=log,
xtick={120,130,140,150,160,170},
xticklabels={,130,140,150,160,},
xlabel style={font=\color{white!15!black}},
xlabel={\# MILPs},
ymode=log,
ymin=0.4,
ymax=10,
yminorticks=true,
yticklabels={},
axis background/.style={fill=white},
legend style={legend cell align=left, align=left, draw=none}
]
\addplot [color=mycolor_N25, only marks, mark size=1.9pt, mark=*, mark options={solid, mycolor_N25}]
  table[]{tikz/data/turbo_runtime/turbo_runtime-4.tsv};

\addplot [color=mycolor_N50, only marks, mark size=1.7pt, mark=*, mark options={solid, mycolor_N50}]
  table[]{tikz/data/turbo_runtime/turbo_runtime-5.tsv};

\addplot [color=mycolor_N100, only marks, mark size=1.5pt, mark=*, mark options={solid, mycolor_N100}]
  table[]{tikz/data/turbo_runtime/turbo_runtime-6.tsv};

\end{axis}

%
%
%

\end{tikzpicture}
	\caption{Comparison for the problem of a turbo car with $N\in\{25,50,100\}$ discretization intervals. Objective value, runtime and number of MILP subproblems for all test runs.}%
	\label{fig:turbo_runtime}%
\end{figure}

\subsection{Processing network design and operation}

In this section we consider the synthesis of a processing network that was originally formulated in \cite[Example 2]{tuerkay1996logic},
see also \cite{grossmann2013systematic}.
The problem entails the concurrent design and operation planning of a networked system, that is, nodes and flows along the edges are jointly optimized.
\cref{fig:network} shows the superstructure which involves the possible selection of 8 interconnected processing units.
With this illustrative example we demonstrate the potential benefits of including NLP refinement steps in \cref{alg:SMIL}
as well as its performance compared to a brute-force combinatorial enumeration coupled with NLP solves.

The original model consists of 25 real-valued $u$ and 8 Boolean variables $Z$
to model flows on the lines and the existence of processing units.
The objective function includes costs for the selected processing units, operating costs and revenues from sales of products.
The formulation involves material balance and flow constraints,
logical propositions,
and disjunctions for consistent physical modeling.
Boolean specifications are transformed into linear algebraic constraints on binary variables $z$.
For instance,
the proposition $Z_3 ~\Rightarrow~ Z_1 \vee Z_2$
is rewritten as the clause $Z_1 \vee Z_2 \vee \neg Z_3$
and then transformed into $z_1 + z_2 \geq z_3$.
Specifications such as $Z_1 \veebar Z_2$, with $\veebar$ the logical ``xor'', are encoded as algebraic linear constraints on the binary-valued variables: $z_1 + z_2 = 1$;
see \cite[Table 4]{tuerkay1996logic}.
Disjunctions involving linear constraints, e.g.,
$Z_5 ~\Rightarrow{}~ u_{15} = 2 u_{16}$
and
$\neg Z_5 ~\Rightarrow{}~ u_{15} = u_{16} = 0$,
are converted into linear big-M constraints.
For disjunctions with nonlinear expressions we introduce auxiliary real-valued variables $w$ and augment the objective with a quadratic penalty term.
For instance,
$Z_1 ~\Rightarrow{}~ \varphi_1(u) \leq 0$
becomes
$w_1 \leq{} M (1 - z_1)$,
for some large enough $M > 0$,
with the nonlinear term $\lambda [ \varphi_1(u) - w_1 ]^2$ added to the objective function, for some large penalty parameter $\lambda > 0$.

\paragraph*{Simulations}
Consider the problem of \cite[Example 2]{tuerkay1996logic}, reformulated as discussed above with parameters $\lambda=M=10^3$;
in the form \eqref{eq:P} there are 30 real-valued and 8 binary-valued decision variables, 11 linear equality constraints, 37 linear inequality constraints, and simple bounds.
We run \cref{alg:SMIL} on this instance starting from 100 initial guesses obtained by sampling a standard normal distribution;
we then execute the variant of \cref{alg:SMIL} with NLP refinement from the same initial guesses.
To stop the execution, we limit the number of successful iterations (that is, actual updates) to 1000.
Finally, for comparing with a tree search approach, we enumerate all $2^8 = 256$ integer combinations and test their compatibility with $\XX$ (by checking feasibility of the associated fixed-integer LP).
For each of the only $8$ feasible combinations, we solve up to $\varepsilon$-stationarity the corresponding fixed-integer NLP, starting from 10 random initial guesses and picking the best solution to obtain a baseline objective value.

\cref{alg:SMIL} does not solve any of the 100 runs to the specified tolerance, hitting the iteration limit in 96 cases and otherwise generating a negative criticality measure.
In contrast, the variant with NLP refinement returns successfully in 69 cases; the MILP solver declares the subproblem infeasible in 8 cases and generates a negative criticality measure in the remaining 23 cases.
The best solution (in terms of objective value) is visualized in \cref{fig:network}, while computational results are summarized in \cref{tab:network}.
Furthermore, \cref{alg:SMIL} performs similarly with the looser tolerance $\varepsilon=10^{-6}$, whereas the variant with NLP refinement returns successfully in 98 cases, with the MILP solver generating a negative criticality measure in the two remaining.

Without NLP refinement, the algorithm was not able to reach an approximate critical point within the tolerance and the iteration limit. 
Behaving like a steepest descent method, the plain \cref{alg:SMIL} suffers from the slow tail convergence that is typical of first-order methods.
Our numerical observations suggest that the integer components often settle on their final value after a few iterations, then leaving the real-valued components to progress steadily, albeit slowly, toward criticality.
In contrast, despite the computational cost of additional NLP solves, the algorithm can converge quickly when using second-order information.
It is particularly important, however, that the integer-valued variables identify soon and correctly the active set.
The relatively low number of iterations reported in \cref{tab:network} indicate the potential benefits of NLP refinement steps to speed up convergence, as illustrated in \cref{fig:network_runtime}.

\begin{figure}[tb]
	\centering%
	\begin{tikzpicture}[scale=0.8, font=\small, thick]
	\draw[dashed] (2, 0.4) rectangle (3, 1.2) {};
	\node at (2.5, 0.8) {$z_1$};
	\draw[black] (2, -0.4) rectangle (3, -1.2);
	\node at (2.5, -0.8) {$z_2$};
	\draw[black] (6, -3.6) rectangle (7, -2.8);
	\node at (6.5, -3.2) {$z_3$};
	\draw[black] (6, 0.4) rectangle (7, 1.2);
	\node at (6.5, 0.8) {$z_4$};
	\draw[dashed] (6, -2.0) rectangle (7, -1.2);
	\node at (6.5, -1.6) {$z_5$};
	\draw[black] (9, 1.2) rectangle (10, 2.0);
	\node at (9.5, 1.6) {$z_6$};
	\draw[dashed] (9, -0.4) rectangle (10, 0.4);
	\node at (9.5, 0) {$z_7$};
	\draw[black] (9, -2.0) rectangle (10, -1.2);
	\node at (9.5, -1.6) {$z_8$};
	%
	%
	%
	\draw[->,black] (0.125, 0) to (1, 0);
	\node[above] at (0.5, 0) {$u_1$};
	\draw (0,0) circle [radius=0.125];
	\draw[->,dashed] (1, 0) -- (1, 0.8) -- (2, 0.8);
	\node[above] at (1.5, 0.8) {$u_2$};
	\draw[->,dashed] (3, 0.8) -- (4, 0.8) -- (4, -0.8+0.1);
	\draw[-,dashed] (4, -0.8+0.1) to (4, -0.8);
	\node[above] at (3.5, 0.8) {$u_3$};
	\draw[->,black] (1, 0) -- (1, -0.8) -- (2, -0.8);
	\node[above] at (1.5, -0.8) {$u_4$};
	\draw[->,black] (3, -0.8) to (4-0.1, -0.8);
	\draw[-,black] (4-0.1, -0.8) to (4, -0.8);
	\node[above] at (3.5, -0.8) {$u_5$};
	\draw[->,black] (4cm+1pt, -0.8) to (5, -0.8);
	\node[above] at (4.5, -0.8) {$u_{11}$};
	\draw[->,black] (5, -0.8) -- (5, 0.8) -- (6, 0.8);
	\node[above] at (5.5, 0.8) {$u_{12}$};
	\draw[->,dashed] (5, -0.8) -- (5, -1.6) -- (6, -1.6);
	\node[above] at (5.5, -1.6) {$u_{15}$};
	\draw[->,black] (4, -0.8) to (4, -3.2);
	\node[left] at (4, -2.0) {$u_6$};
	\draw[->,black] (4, -3.2) to (4, -4.0);
	\node[left] at (4, -3.6) {$u_7$};
	\draw (4,-4.0-0.125) circle [radius=0.125];
	\draw[->,black] (4, -3.2) to (6, -3.2);
	\node[above] at (5, -3.2) {$u_8$};
	\draw[->,black] (10, 1.6) -- (11, 1.6) -- (11, 0.8+0.1);
	\draw[-,black] (11, 0.8+0.1) to (11, 0.8);
	\node[above] at (10.5, 1.6) {$u_{20}$};
	\draw[->,dashed] (10, 0) -- (11, 0) -- (11, 0.8-0.1);
	\draw[-,dashed] (11, 0.8-0.1) to (11, 0.8);
	\node[above] at (10.5, 0) {$u_{22}$};
	\draw[->,black] (11, 0.8) to (12, 0.8);
	\node[above] at (11.5, 0.8) {$u_{23}$};
	\draw[->,black] (12, 0.8) to (12, -0.8+0.125);
	\node[left] at (12, -0.4) {$u_{24}$};
	\draw (12,-0.8) circle [radius=0.125];
	\draw[->,black] (12, 0.8) -- (12, 2.4) -- (6.5, 2.4) -- (6.5, 1.2);
	\node[left] at (6.5, 2.0) {$u_{14}$};
	\draw[->,black] (7, 0.8) to (8, 0.8);
	\node[above] at (7.5, 0.8) {$u_{13}$};
	\draw[->,black] (8, 0.8) -- (8, 1.6) -- (9, 1.6);
	\node[above] at (8.5, 1.6) {$u_{19}$};
	\draw[->,dashed] (8, 0.8) -- (8, 0) -- (9, 0);
	\node[above] at (8.5, 0) {$u_{21}$};
	\draw[->,dashed] (7, -1.6) to (8-0.1, -1.6);
	\draw[-,dashed] (8-0.1, -1.6) to (8, -1.6);
	\node[above] at (7.5, -1.6) {$u_{16}$};
	\draw[->,black] (8, -1.6) to (9, -1.6);
	\node[above] at (8.5, -1.6) {$u_{17}$};
	\draw[->,dashed] (8, -0.8-0.125) to (8, -1.6+0.1);
	\draw[-,dashed] (8, -1.6+0.1) to (8, -1.6);
	\node[above] at (8, -0.8) {$u_{25}$};
	\draw (8,-0.8) circle [radius=0.125];
	\draw[->,black] (7, -2.8-0.25) -- (8, -2.8-0.25) -- (8, -1.6-0.1);
	\draw[-,black] (8, -1.6-0.1) to (8, -1.6);
	\node[above] at (7.5, -2.8-0.25) {$u_9$};
	\draw[->,black] (7, -3.2-0.25) -- (9.5, -3.2-0.25) -- (9.5, -2.0);
	\node[above] at (9, -3.2-0.25) {$u_{10}$};
	\draw[->,black] (10, -1.6) -- (12, -1.6) -- (12, -2.4+0.125);
	\node[above] at (11.5, -1.6) {$u_{18}$};
	\draw (12,-2.4) circle [radius=0.125];
\end{tikzpicture}%
	\caption{Superstructure for the processing network example: units (boxes) and edges (lines) selected (solid) and discarded (dashed) by the optimal design.}%
	\label{fig:network}%
\end{figure}

\begin{table}[tb]
	\centering%
	\caption{Summary of computational results for the problem of a processing network.
		Statistics (minimum, maximum, and quartiles over different initial guesses) for the final objective value, runtime, and number of iterations, MILP and NLP subproblems.
		\cref{alg:SMIL}'s runtime includes NLP solves but not the polyhedral projection of infeasible initial guesses, provided for comparison.}%
	\label{tab:network}%
	\begin{tabular}{cc|ccccc}
		& & min & q(25\%) & median & q(75\%) & max \\
		\midrule
		without NLP & objective value 	& 76.92 & 80.31 & 90.62 & 103.60 & 116.39 \\
		refinement & \# iterations 			& 186 & 1000\textsuperscript{$\bowtie$} & 1000\textsuperscript{$\bowtie$} & 1000\textsuperscript{$\bowtie$} & 1000\textsuperscript{$\bowtie$} \\
		& \# MILPs 		& 353 & 1882 & 1892.5 & 1907.5 & 1933 \\
		& runtime [s] 					& 1.27 & 6.15 & 6.19 & 6.24 & 6.47 \\
		\midrule
		with NLP  & objective value  	& 59.85 & 72.09 & 83.75 & 95.65 & 107.56 \\
		refinement & \# iterations 			& 2 & 12.5 & 18 & 22 & 29 \\
		& \# MILPs 		& 6 & 32.5 & 45.5 & 54 & 73 \\
		& \# NLPs				& 1 & 12 & 16 & 20.5 & 28 \\
		& runtime [s] 					& 0.05 & 0.23 & 0.31 & 0.39 & 0.76 \\
		\midrule
		fixed-integer NLP & objective value 	& 59.85 & 72.09 & 83.70 & 95.65 & 107.56 \\
		& runtime [s] 					& 0.02 & 0.02 & 0.03 & 0.03 & 0.04 \\
		\midrule
		initial projection & runtime [s] 		& 0.01 & 0.01 & 0.01 & 0.01 & 0.09
	\end{tabular}%
	\footnotetext{The symbol \textsuperscript{$\bowtie$} stands for a limit reached.}
\end{table}

\begin{figure}[tb]
	\centering%
	\definecolor{mycolor1}{RGB}{27,158,119}%
\definecolor{mycolor2}{RGB}{217,95,2}%
\definecolor{mycolor3}{RGB}{117,112,179}%
\begin{tikzpicture}

\begin{axis}[%
width=0.36\columnwidth,
height=0.23\columnwidth,
at={(0,0.25\columnwidth)},
scale only axis,
xmin=58,
xmax=118,
xticklabels={},
ymode=log,
ymin=0.01,
ymax=10,
yminorticks=true,
ylabel style={font=\color{white!15!black}},
ylabel={runtime [s]},
axis background/.style={fill=white},
legend style={legend cell align=left, align=left, draw=none, anchor=south, at={(1,1.05)}},
legend style={/tikz/every even column/.append style={column sep=0.5cm}},
legend columns=-1
]
\addplot [color=mycolor1, only marks, mark size=1.9pt, mark=*, mark options={solid, mycolor1}]
  table[]{tikz/data/network_runtime/network_runtime-1.tsv};
\addlegendentry{without NLP refinement}

\addplot [color=mycolor2, only marks, mark size=1.7pt, mark=*, mark options={solid, mycolor2}]
  table[]{tikz/data/network_runtime/network_runtime-2.tsv};
\addlegendentry{with NLP refinement}

\addplot [color=mycolor3, only marks, mark size=1.5pt, mark=triangle*, mark options={solid, mycolor3}]
  table[]{tikz/data/network_runtime/network_runtime-3.tsv};
\addlegendentry{fixed-integer NLP}

\end{axis}

\begin{axis}[%
width=0.36\columnwidth,
height=0.23\columnwidth,
at={(0.39\columnwidth,0.25\columnwidth)},
scale only axis,
xmode=log,
xmin=5,
xmax=3000,
xticklabels={},
ymode=log,
ymin=0.01,
ymax=10,
yminorticks=true,
yticklabels={},
axis background/.style={fill=white},
legend style={legend cell align=left, align=left, draw=white!15!black}
]
\addplot [color=mycolor1, only marks, mark size=1.9pt, mark=*, mark options={solid, mycolor1}]
  table[]{tikz/data/network_runtime/network_runtime-4.tsv};

\addplot [color=mycolor2, only marks, mark size=1.7pt, mark=*, mark options={solid, mycolor2}]
  table[]{tikz/data/network_runtime/network_runtime-5.tsv};
\end{axis}

\begin{axis}[%
width=0.36\columnwidth,
height=0.23\columnwidth,
at={(0,0)},
scale only axis,
unbounded coords=jump,
xmin=58,
xmax=118,
xlabel style={font=\color{white!15!black}},
xlabel={objective value},
ymode=log,
ymin=1e-10,
ymax=1,
yminorticks=true,
ylabel style={font=\color{white!15!black}},
ylabel={criticality},
ytick={1e-2,1e-5,1e-8},
axis background/.style={fill=white},
legend style={legend cell align=left, align=left, draw=white!15!black}
]
\addplot [color=mycolor1, only marks, mark size=1.9pt, mark=*, mark options={solid, mycolor1}]
  table[]{tikz/data/network_runtime/network_runtime-6.tsv};

\addplot [color=mycolor2, only marks, mark size=1.7pt, mark=*, mark options={solid, mycolor2}]
  table[]{tikz/data/network_runtime/network_runtime-7.tsv};
\end{axis}

\begin{axis}[%
width=0.36\columnwidth,
height=0.23\columnwidth,
at={(0.39\columnwidth,0)},
scale only axis,
unbounded coords=jump,
xmode=log,
xmin=5,
xmax=3000,
xlabel style={font=\color{white!15!black}},
xlabel={\# MILPs},
ymode=log,
ymin=1e-10,
ymax=1,
yminorticks=true,
ytick={1e-2,1e-5,1e-8},
yticklabels={},
axis background/.style={fill=white},
legend style={legend cell align=left, align=left, draw=white!15!black}
]
\addplot [color=mycolor1, only marks, mark size=1.9pt, mark=*, mark options={solid, mycolor1}]
  table[]{tikz/data/network_runtime/network_runtime-8.tsv};

\addplot [color=mycolor2, only marks, mark size=1.7pt, mark=*, mark options={solid, mycolor2}]
  table[]{tikz/data/network_runtime/network_runtime-9.tsv};
\end{axis}

\end{tikzpicture}
	\caption{Comparison of results for the problem of a processing network. Objective value, criticality measure, runtime and number of MILP subproblems for all test runs. Runtimes of fixed-integer NLP solves refer only to feasible integer combinations and does not include the preliminary feasibility check. Test runs returning with negative criticality are omitted in the lower plots.}%
	\label{fig:network_runtime}%
\end{figure}

Finally, we point out that, in the variant with NLP refinement, \cref{alg:SMIL} always recovered one of the solutions that are obtained by enumerating the binary combinations and solving the associated fixed-integer NLP, thus indicating a good performance in terms of objective value too.
Not including the preliminary step needed to ascertain feasibility, the runtime of these fixed-integer NLP solves is reported in \cref{tab:network,fig:network_runtime} to contrast with that of \cref{alg:SMIL}.
With NLP refinement, \cref{alg:SMIL}'s computation time amounts to that of NLP and MILP solves, hence it hinges on how many integer combinations are explored.
Analogously, typical MINLP techniques (such as nonlinear branch and bound, outer approximation and Benders decomposition)
involve searching on a tree whose nodes correspond to NLPs; as a result, their performance depends on the branching strategy and the relaxation tightness \cite{belotti2013mixed}.
How these methodologies can be integrated with \cref{alg:SMIL} and investigations on their relative efficiency are left for future assessments.

\section{Final remarks}

\fancyinitial{T}{he} results in this paper could be extended by integrating acceleration schemes
and exploring the effects of inexact subproblem solves on the convergence.
A connection with other stationarity notions should be established, at least for some prominent settings, such as problems with disjunctive constraints.
Weaker assumptions on the problem structure and stronger optimality concepts could also be examined, for instance replacing $\psimeas(x;\Delta)$ with $\psimeas(x;\Delta)/\Delta$ as criticality measure, see \cref{lem:noncritical}.
Future research may focus on active-set warm-starting and lazy constraints for computational efficiency, taking advantage of the similarity of successive inner MILPs.
The proposed approach could also be adopted to tackle problems involving additional nonlinear constraints,
by integrating it within sequential partially-constrained optimization schemes,
such as penalty and barrier methods.

\subsection*{Acknowledgement}
\TheAcknowledgementsADM

\phantomsection
\addcontentsline{toc}{section}{References}%
\bibliographystyle{plain}
\bibliography{biblio}

\end{document}